\documentclass[12pt,a4paper]{amsart} 
\usepackage{amsmath}
\usepackage{amsthm}
\usepackage{amssymb}
\usepackage{ot1patch}
\usepackage{mathrsfs}

\newtheorem{thm}{Theorem}[section]
\newtheorem{lem}[thm]{Lemma}
\newtheorem{cor}[thm]{Corollary}
\newtheorem{prop}[thm]{Proposition}

\theoremstyle{remark}
\newtheorem{rem}[thm]{Remark}
\newtheorem*{rem*}{Remark}

\theoremstyle{definition}
\newtheorem{dfn}[thm]{Definition}
\newtheorem{ex}[thm]{Example}

\numberwithin{equation}{section}

\newcommand{\C}{\mathbb{C}}
\newcommand{\Rz}{\mathbb{R}}


\textwidth 13.5 cm

\begin{document}

\title{On the complex \L ojasiewicz inequality with parameter}

\author{Maciej P. Denkowski
}\address{Jagiellonian University, Faculty of Mathematics and Computer Science, Institute of Mathematics, \L ojasiewicza 6, 30-348 Krak\'ow, Poland}\email{maciej.denkowski@uj.edu.pl}\date{April 15th 2014, \textit{revised:} June 6th 2014}
\keywords{\L ojasiewicz inequality, complex analytic geometry, weak convergence of currents, c-holomorphic functions}
\subjclass{32B15, 14C17}

\begin{abstract} 
We prove a continuity property in the sense of currents of a continuous family of holomorphic functions which allows us to obtain a \L ojasiewicz inequality with an effective exponent independent of the parameter.
\end{abstract}

\maketitle

\section{Introduction}

The \textit{\L ojasiewicz inequality} introduced in \cite{L} is one of the most important tools in singularity theory, both complex and real. The first result concerning a parametrized family --- but, of course, with \textit{an exponent that is independent of the parameter} --- is due to \L ojasiewicz and Wachta \cite{LW}. Fairly recently, we have obtained in \cite{D4} an effective \L ojasiewicz inequality with parameter in complex analytic geometry, using only complex analytic methods. This article is somehow a continuation of that work, inspired to some extent by the observations made in \cite{D3} and the intersection theory results introduced in \cite{T}.

Our best results are presented in the following theorem. Throughout the paper we assume that the topological space $T$ is \textit{1st countable}.
\begin{thm}
Assume that $f\colon T\times\Omega\to{\C}$ is a continuous function where $T$ is a locally compact, connected topological space,  $\Omega\subset{\C}^m$ is a domain, and for all $t\in T$, $f_t\in\mathcal{O}(\Omega)$ does not vanish identically. Assume moreover that $0\in\Omega$ and $f_{t}(0)=0$ for any $t$. Then \begin{enumerate}
\item $Z_{f_t}\to Z_{f_0}$ in the sense of currents, where $Z_{f_t}$ denotes the cycle of zeroes of $f_{t}$;
\item there is a neighbourhood $U\subset\Omega$ of zero in which, for all $t$ close enough to $t_0$, $$|f_t(x)|\geq c(t)\mathrm{dist}(x,f_t^{-1}(0))^{\alpha},$$ where $c(t)>0$ is a constant depending on the parameter, but the exponent $\alpha=\mathrm{ord}_0 f_0$ is uniform.
\end{enumerate}
\end{thm}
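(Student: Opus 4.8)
The plan is to derive both statements from a single device: a Weierstrass preparation of $f_t$ carried out with respect to a direction that is generic for the \emph{limit} function $f_0$, together with the observation that the degree of the resulting Weierstrass polynomial is locally constant in $t$ and equal to $\alpha=\mathrm{ord}_0 f_0$. First I would turn the hypotheses into an analytic statement: since $f$ is jointly continuous and $\Omega$ is locally compact, a tube-lemma argument on $\{t_0\}\times K$ (for $K\subset\Omega$ compact) shows that $f_t\to f_0$ uniformly on compact subsets of $\Omega$ as $t\to t_0$, and by the Cauchy estimates this upgrades to convergence in $\mathcal{O}(\Omega)$, i.e. all derivatives converge locally uniformly. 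Here $1$st countability lets me phrase everything through sequences $t_n\to t_0$, and local compactness of $T$ supplies a compact parameter neighbourhood $\overline W\ni t_0$ on which $|f_t|$ is uniformly bounded. After a linear change of coordinates I may assume the $z_m$-axis is generic for $f_0$, so that $f_0(0,\dots,0,z_m)=c\,z_m^{\alpha}+\dots$ with $c\neq0$ and $\alpha=\mathrm{ord}_0 f_0$.

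For statement (1) I would invoke the Poincar\'e--Lelong formula $[Z_{f_t}]=\tfrac{i}{\pi}\partial\bar\partial\log|f_t|$ (up to the usual normalisation) and reduce the weak convergence of currents to the convergence $\log|f_t|\to\log|f_0|$ in $L^1_{\mathrm{loc}}(\Omega)$. The functions $\log|f_t|$ are plurisubharmonic, uniformly bounded above on compact subsets of $\overline W\times\Omega$, and converge pointwise to $\log|f_0|$ off the measure-zero set $f_0^{-1}(0)$; by the standard convergence theory for plurisubharmonic functions this forces $L^1_{\mathrm{loc}}$ convergence (no negative mass escapes because the limit is a genuine plurisubharmonic function, $f_0\not\equiv0$), and the continuity of $\partial\bar\partial$ on currents then gives $Z_{f_t}\to Z_{f_0}$. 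Equivalently, and more in the geometric spirit of the intersection-theoretic set-up, the same conclusion follows from the Weierstrass description below: $Z_{f_t}\cap P$ is the $\alpha$-sheeted branched cover over $P'$ with fibre $\{\lambda_1(z',t),\dots,\lambda_\alpha(z',t)\}$, and the continuity in $t$ of the elementary symmetric functions of the $\lambda_j$ yields the convergence of these cycles.

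For statement (2) I would apply Weierstrass preparation to each $f_t$ over a \emph{fixed} polydisc $P=P'\times D\subset\Omega$, chosen so that $f_0$ does not vanish on $P'\times\partial D$; the degree of the resulting Weierstrass polynomial is then the number of zeros of $f_t(z',\cdot)$ inside $D$, which is constant in $z'$. The decisive point is that this fibre zero-count equals $\alpha$ for every $t$ near $t_0$: by Hurwitz's theorem applied to $f_t(0,\cdot)\to f_0(0,\cdot)$ it stays equal to the value $\alpha$ it takes for $f_0$, \emph{even though $\mathrm{ord}_0 f_t$ may be strictly smaller than $\alpha$} --- the surplus zeros merely migrate off the origin rather than disappearing. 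Thus for $t$ close to $t_0$ one has $f_t=\tilde u_t\,P_t$ on $P$ with $P_t(z',z_m)=\prod_{j=1}^{\alpha}\bigl(z_m-\lambda_j(z',t)\bigr)$ of the \emph{uniform} degree $\alpha$ and $\tilde u_t$ a nonvanishing unit; the coefficients of $P_t$ and a lower bound $c(t)>0$ for $|\tilde u_t|$ on a smaller polydisc $U\Subset P$ depend continuously on $t$ through the contour-integral formulas for the power sums of the $\lambda_j$. Setting $\delta(z)=\min_j|z_m-\lambda_j(z',t)|$, the point $(z',\lambda_{j_0}(z',t))$ lies in $f_t^{-1}(0)$, whence $\mathrm{dist}(z,f_t^{-1}(0))\le\delta(z)$, while $|f_t(z)|=|\tilde u_t(z)|\prod_j|z_m-\lambda_j(z',t)|\ge c(t)\,\delta(z)^{\alpha}$ since each of the $\alpha$ factors is at least $\delta(z)$. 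On $U$ this yields $|f_t(z)|\ge c(t)\,\mathrm{dist}(z,f_t^{-1}(0))^{\alpha}$ with both $U$ and $\alpha$ independent of $t$.

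The main obstacle is exactly the uniformity, and it is concentrated in the claim that the relevant Weierstrass degree is the locally constant quantity $\alpha=\mathrm{ord}_0 f_0$ rather than the pointwise-varying $\mathrm{ord}_0 f_t$: one must check that the polydisc $P$, the fibre zero-count, and the continuity of the symmetric functions all survive for the merely topological, non-metric parameter $t$, which is where $1$st countability and local compactness of $T$ enter. For (1) the delicate part is instead the behaviour of $\log|f_t|$ near the zero sets, i.e. ruling out concentration of negative mass; the uniform upper bound on a compact parameter neighbourhood together with almost-everywhere convergence is what makes the plurisubharmonic convergence theorem applicable. Once these are in place, the somewhat surprising feature --- that the correct uniform exponent is the \emph{larger} limiting order $\alpha$, not the individual orders $\mathrm{ord}_0 f_t$ --- is explained transparently by the stability of the number of sheets of the branched cover.
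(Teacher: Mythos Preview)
Your argument is correct, and for part~(2) it follows essentially the same route as the paper: choose a direction generic for $f_{t_0}$, apply Weierstrass preparation on a fixed polydisc $P'\times D$ to write $f_t=\tilde u_t P_t$, and then use the product formula $|P_t(z',z_m)|=\prod_j|z_m-\lambda_j(z',t)|\ge \mathrm{dist}(z,f_t^{-1}(0))^{\alpha}$. The paper isolates this last step as a separate proposition (its Proposition~3.6) and phrases the constancy of the sheet number as the equality $\mathrm{deg}\bigl((\{0\}\times W)\cdot Z_{f_t}\bigr)=\mathrm{deg}\bigl((\{0\}\times W)\cdot Z_{f_{t_0}}\bigr)=\mathrm{ord}_0 f_{t_0}$, which it reads off from the cycle convergence in~(1); you obtain the same constancy directly from Hurwitz applied to $f_t(0,\cdot)$. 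So the logical dependence is reversed: the paper proves~(1) first and uses it for~(2), whereas you prove~(2) without~(1).

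For part~(1) the approaches genuinely differ. The paper works entirely in the intersection-theoretic framework: it first shows $\Gamma_{f_t}\to\Gamma_{f_{t_0}}$ in the Tworzewski sense (via locally uniform convergence and transversality of testing manifolds), then invokes the continuity of proper intersection (\cite{T}, Lemma~3.5) to pass to $Z_{f_t}=\Gamma_{f_t}\cdot(\Omega\times\{0\})$; the identification with weak convergence of currents is only a remark. Your primary argument via Poincar\'e--Lelong and the $L^1_{\mathrm{loc}}$ compactness of plurisubharmonic functions is shorter and more analytic, and it dispenses with the intersection-theory machinery altogether. What the paper's route buys, on the other hand, is that the same proof works verbatim for c-holomorphic functions on a pure-dimensional analytic set $A$ (where there is no Poincar\'e--Lelong formula and your $\log|f_t|$ argument is unavailable), which is the setting the paper goes on to exploit.
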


For the convenience of the reader let us recall two basic notions of convergence of sets, especially useful in analytic geometry (see e.g. \cite{DD} and \cite{TW}). We consider the following situation: $T$ is  a topological space and $E\subset T\times{\Rz}^n$ is a set with closed sections $E_t=\{x\in{\Rz}^n\mid (t,x)\in E\}$ and we put $F:=\pi(E)$ for $\pi(t,x)=t$. Assume that $t_0$ is an accumulation point of $F$.

\begin{dfn}(see e.g. \cite{DD})
We say that $E_t$ \textit{converges in the sense of Kuratowski} to a set $A$, when $t\to t_0$, if \begin{itemize}
\item for any $x\in A$, for any neighbourhood $U$ of $x$, there is a neighbourhood $V$ of $t_0$ such that $U\cap E_t\neq\varnothing$ for all $t\in V\cap F\setminus \{t_0\}$;
\item if $x$  is such that for any neighbourhood $U\ni x$ and any neighbourhood $V\ni t_0$ there is a point $t\in V\setminus \{t_0\}$ such that $U\cap E_t\neq\varnothing$, then $x\in A$.
\end{itemize}
We write then $E_t\stackrel{K}{\longrightarrow} A$.\\ If for each $t_0$, $E_t\stackrel{K}{\longrightarrow} E_{t_0}$, then we say that $E$ \textit{has continuously varying fibres}.
\end{dfn}
\begin{rem}\label{zb}
It is easy to see (cf. \cite{TW}, \cite{DD}) that this convergence for the graphs of a sequence continuous functions is precisely the \textit{local uniform convergence} of the functions themselves.
\end{rem}

We have the following straightforward observation:
\begin{lem}
If any point in $T$ has a countable basis of neighbourhoods, then $E_t\stackrel{K}{\longrightarrow} A$ when $t\to t_0$ iff \begin{itemize}
 \item if $x\in A$, then for any sequence $t_\nu\to t_0$ we can find points $E_{t_\nu}\ni x_\nu\to x$;
 \item if $x$ is such that there is a sequence $t_\nu\to t_0$ and points $E_{t_\nu}\ni x_\nu\to x$, then $x\in A$.
 \end{itemize}
\end{lem}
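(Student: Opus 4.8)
The plan is to prove that the two clauses of the Kuratowski definition are \emph{separately} equivalent to the two clauses of the sequential characterization, the entire content being the translation of ``for every neighbourhood $V$ of $t_0$'' into ``for every term of a sequence''. The single hypothesis we exploit is that $t_0$ has a countable basis of neighbourhoods; replacing such a basis by the family of its finite intersections, I may and will fix a \emph{decreasing} countable basis $V_1\supseteq V_2\supseteq\cdots$ of neighbourhoods of $t_0$. On the other side, the candidate limit points $x$ live in $\mathbb{R}^n$, which is metric, so the balls $U_k:=B(x,1/k)$ already form a decreasing neighbourhood basis of $x$ and no assumption on $T$ is needed there. Throughout, a sequence $t_\nu\to t_0$ is understood to lie in $F\setminus\{t_0\}$, since otherwise $E_{t_\nu}$ would be empty and no $x_\nu\in E_{t_\nu}$ could be chosen.

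For the first clause, assume first the Kuratowski condition and take $x\in A$ together with a sequence $t_\nu\to t_0$ in $F\setminus\{t_0\}$. For each $k$ the condition applied to $U_k$ furnishes a neighbourhood $W_k\ni t_0$ with $U_k\cap E_t\neq\varnothing$ for all $t\in W_k\cap F\setminus\{t_0\}$; since $t_\nu\to t_0$, for each fixed $k$ one has $t_\nu\in W_k$ for all large $\nu$, so letting $k(\nu)$ be the largest index $k\le\nu$ with $t_\nu\in W_k$ one checks that $k(\nu)\to\infty$, and choosing $x_\nu\in U_{k(\nu)}\cap E_{t_\nu}$ yields $x_\nu\to x$. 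For the converse I argue by contraposition: if the Kuratowski condition fails, there are $x\in A$ and a neighbourhood $U\ni x$ such that every neighbourhood of $t_0$ contains some $t\in F\setminus\{t_0\}$ with $U\cap E_t=\varnothing$; feeding the basis $(V_k)$ into this produces $t_k\in V_k\cap F\setminus\{t_0\}$ with $U\cap E_{t_k}=\varnothing$, whence $t_k\to t_0$ while no selection $x_k\in E_{t_k}$ can approach $x$, so the sequential condition fails as well.

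For the second clause it is cleanest to show that, for a \emph{fixed} point $x$, the topological hypothesis (``for all $U\ni x$ and $V\ni t_0$ there is $t\in V\setminus\{t_0\}$ with $U\cap E_t\neq\varnothing$'') is equivalent to the sequential one (``there are $t_\nu\to t_0$ and $E_{t_\nu}\ni x_\nu\to x$''); the two implications ``$\Rightarrow x\in A$'' then coincide. That the sequential hypothesis implies the topological one requires no countability: given the two convergent sequences, for any $U$ and $V$ large $\nu$ puts $t_\nu\in V\cap F\setminus\{t_0\}$ and $x_\nu\in U\cap E_{t_\nu}$. For the reverse implication I again use first countability, applying the topological hypothesis to the pairs $(U_k,V_k)$ to extract $t_k\in V_k\cap F\setminus\{t_0\}$ and $x_k\in U_k\cap E_{t_k}$, which give $t_k\to t_0$ and $x_k\to x$.

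The argument is entirely elementary; the only place where anything could go wrong is the selection in the first clause, where one must organize the indices so that $k(\nu)\to\infty$ (handling the finitely many $\nu$ with $k(\nu)$ undefined by an arbitrary choice in the nonempty $E_{t_\nu}$). I expect this bookkeeping --- together with the systematic passage to the decreasing countable basis of $t_0$, which is exactly the point where the first-countability assumption on $T$ is indispensable --- to be the sole, and quite mild, obstacle.
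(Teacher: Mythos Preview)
Your argument is correct. The paper itself does not supply a proof of this lemma at all: it is introduced as a ``straightforward observation'' and left unproved. Your write-up therefore furnishes exactly the routine details the author suppressed, and it does so cleanly --- the passage to a decreasing countable basis $(V_k)$ at $t_0$, the use of metric balls $U_k$ on the $\mathbb{R}^n$ side, and the diagonal selection $k(\nu)$ in the first clause are the standard moves one expects here. The only delicacies you had to address (the finitely many $\nu$ for which $k(\nu)$ is undefined, and the implicit convention that sequences $t_\nu\to t_0$ lie in $F\setminus\{t_0\}$) are genuine and you handle them correctly.
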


In complex analytic geometry this kind of convergence is very useful for different purposes (Bishop's Theorem, algebraic approximation as in \cite{B} or algebraicity criteria as in \cite{DP}). We may refine it taking into account multiplicities (cf. \cite{T} and \cite{Ch}). In order to do so, consider a sequence of \textit{positive pure $k$-dimensional analytic cycles} (\footnote{A positive pure $k$-dimensional cycle $Z$ is a formal sum $\sum \alpha_\iota S_\iota$ where $\alpha_\iota>0$ are integers and $\{S_\iota\}$ is a locally finite family of irreducible $k$-dimensional analytic sets; then the analytic set $|Z|:=\bigcup S_\iota$ is called the \textit{support} of $Z$; for details see \cite{T}.}) $Z_\nu$, $\nu=0,1,2,\dots$ in some $\Omega\subset{\C}^m$ (of course, everything can be carried over to manifolds).

\begin{dfn}[Tworzewski \cite{T}]
We say that $Z_\nu$ \textit{converges to $Z_0$ in the sense of Tworzewski}, if\begin{itemize}
\item the supports $|Z_\nu|\stackrel{K}{\longrightarrow} |Z_0|$;
\item for any regular point $a\in\mathrm{Reg}|Z_0|$ and any relatively compact manifold $M$ of complementary dimension, transversal to $|Z_0|$ and $a$ and such that $\overline{M}\cap|Z_0|=\{a\}$, we have for the \textit{total number of intersection} (\footnote{By \cite{TW}, almost all interesections $|Z_\nu|\cap M$ are discrete and so finite. Then the total number of intersection is the formal sum of the intersection points with their respective Draper intersection indices \cite{Dr} taken into account.}) $\mathrm{deg}(Z_\nu\cdot M)=\mathrm{deg}(Z_0\cdot M)$ from some index $\nu_0$ onwards.
\end{itemize}
We will call $M$ \textit{a testing manifold for $Z_0$ at $a$}.
\end{dfn}
\begin{rem}
As noted by Alain Yger \cite{Y}, this convergence is precisely \textit{the weak convergence of the corresponding integration currents} $[Z_\nu]$. See also the general though not very precise discussion in \cite{Ch}.

By \cite{T} Lemma 3.2 it is sufficient to consider testing manifolds at a dense subset of the regular points of $|Z_0|$.

Of course, the definition may be extended to families $\{Z_t\}$ where $t$ belongs to a topological space $T$.
\end{rem}

It will be useful to state clearly the following observation being a mere corollary to the result of \cite{TW}:
\begin{prop}\label{wlasciwosc}
If $X_0,Y_0$ are analytic subsets of an open set $\Omega\subset{\C}^m$ of pure dimensions $p,q$ respectively, and if $X_0\cap Y_0$ has pure dimension $p+q-m$, then for any sequences $X_\nu\stackrel{K}{\longrightarrow} X_0$ and $Y_\nu\stackrel{K}{\longrightarrow} Y_0$ of analytic subsets of $\Omega$ of pure dimension $p$ and $q$ respectively, the intersections $X_\nu\cap Y_\nu$ are proper (i.e. of pure dimension $p+q-m$) for all indices large enough.
\end{prop}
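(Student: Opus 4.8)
\emph{Proof proposal.} The plan is to argue locally and by contradiction, converting the hypothesis into a statement about the fibres of a projection adapted to the proper limiting intersection. First I would record the two facts that frame the problem. Since $X_\nu,Y_\nu$ are of pure dimensions $p,q$ in $\C^m$, wherever $X_\nu\cap Y_\nu$ is nonempty each of its components has dimension at least $p+q-m$; thus properness amounts to the \emph{upper} bound $\dim(X_\nu\cap Y_\nu)\le k$ with $k:=p+q-m$. On the other hand, the sequential description of Kuratowski convergence in the Lemma yields at once the inclusion of upper limits $\limsup_\nu(X_\nu\cap Y_\nu)\subseteq X_0\cap Y_0$: if $x_{\nu_j}\in X_{\nu_j}\cap Y_{\nu_j}$ and $x_{\nu_j}\to x$, then $x\in X_0$ and $x\in Y_0$ by the upper condition applied to each family separately. (If $k<0$, i.e. $X_0\cap Y_0=\varnothing$, this inclusion already forces $X_\nu\cap Y_\nu$ to be eventually empty on every compact set, so assume $k\ge 0$.)

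Next I would fix a point $a\in X_0\cap Y_0$ and choose coordinates tailored to the \emph{proper} germ $(X_0\cap Y_0,a)$. Since this germ has pure dimension $k$, the local parametrization theorem provides linear coordinates $w=(w',w'')\in\C^{k}\times\C^{m-k}$ centred at $a$ and polydiscs $P'\subset\C^{k}$, $P''\subset\C^{m-k}$ such that $(X_0\cap Y_0)\cap(\overline{P'}\times\partial P'')=\varnothing$, with the projection $q\colon w\mapsto w'$ proper and finite on $(X_0\cap Y_0)\cap(P'\times P'')$. The point of this choice is that the compact ``collar'' $\overline{P'}\times\partial P''$ is disjoint from the limit set. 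By the inclusion above together with a routine compactness argument (here the first countability of $T$ is used, via the Lemma), this disjointness persists: there is $\nu_0$ with $(X_\nu\cap Y_\nu)\cap(\overline{P'}\times\partial P'')=\varnothing$ for all $\nu\ge\nu_0$. Consequently the slice $(X_\nu\cap Y_\nu)\cap(\overline{P'}\times\overline{P''})$ is compact and avoids the boundary $\partial P''$ of the fibre polydisc.

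The heart of the argument is then a fibre–dimension count. Suppose, contrary to the claim, that for infinitely many $\nu\ge\nu_0$ the set $X_\nu\cap Y_\nu$ has near $a$ an irreducible component $W$ of dimension $\ge k+1$ meeting $P'\times P''$. Restricting $q$ to $W$ and counting dimensions ($\dim q(W)\le k$) shows that the fibre $F:=W\cap q^{-1}(w_0')$ over a generic interior point $w_0'\in P'$ has dimension $\ge(k+1)-k=1$. But $F\subset\{w_0'\}\times P''$, and since $W\subseteq X_\nu\cap Y_\nu$ does not meet $\overline{P'}\times\partial P''$, any limit inside $\{w_0'\}\times\overline{P''}$ of points of $F$ lies in $W$ (closed) and avoids $\partial P''$, hence stays in $F$; so $F$ is a \emph{compact} analytic subset of the open polydisc $P''\subset\C^{m-k}$ of positive dimension. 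This is impossible by the maximum principle, and the contradiction shows $\dim_a(X_\nu\cap Y_\nu)\le k$ for all $\nu\ge\nu_0$. Combined with the universal lower bound, $X_\nu\cap Y_\nu$ is proper near $a$.

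Finally I would globalize: properness is a local, componentwise condition, so covering a relatively compact part of $\Omega$ by finitely many such neighbourhoods $P'\times P''$ yields a single threshold beyond which $X_\nu\cap Y_\nu$ is of pure dimension $k$ there (and, by the first paragraph, is empty near points of $\Omega\setminus(X_0\cap Y_0)$). The step I expect to be delicate is precisely the persistence of the ``no escape through the collar'' $\overline{P'}\times\partial P''$: everything hinges on upgrading the pointwise upper Kuratowski inclusion to \emph{eventual} disjointness from this fixed compact set, which is what prevents the excess components $W$ from collapsing in the limit and makes the fibre $F$ genuinely compact. Conceptually this is the same mechanism underlying the continuity results of \cite{TW}; the diagonal trick $X_\nu\cap Y_\nu\cong(X_\nu\times Y_\nu)\cap\Delta$ could be used to reduce to intersecting a single varying family with the fixed smooth diagonal, but it is not needed for the argument above.
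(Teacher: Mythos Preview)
Your argument is correct and follows essentially the same route as the paper's proof: fix $a\in X_0\cap Y_0$, choose local parametrization coordinates $P'\times P''\subset\C^{k}\times\C^{m-k}$ so that $X_0\cap Y_0$ avoids the collar $\overline{P'}\times\partial P''$, use the (upper) Kuratowski convergence to make this disjointness persist for large $\nu$, and then conclude that the projection of $X_\nu\cap Y_\nu$ onto $P'$ is proper, forcing $\dim\le k$. The only cosmetic differences are that the paper invokes \cite{TW} for $X_\nu\cap Y_\nu\stackrel{K}{\longrightarrow}X_0\cap Y_0$ (where you use only the easy upper inclusion, which suffices) and appeals to Remmert's Proper Mapping Theorem for the dimension bound, whereas you spell out the underlying mechanism by hand via the maximum principle on the positive-dimensional compact fibre.
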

\begin{proof}
By \cite{TW} we know that $X_\nu\cap Y_\nu\stackrel{K}{\longrightarrow} X_0\cap Y_0$. Besides, at any $a\in X_\nu\cap Y_\nu$ we obviously have $\dim_a X_\nu\cap Y_\nu\geq p+q-m$. 

Now fix a point $a\in X_0\cap Y_0$ and choose coordinates in such a way that in a bounded neighbourhood $W=U\times V\subset{\C}^{p+q-m}\times{\C}^{2m-p-q}$ of $a$ the natural projection onto $U$ restricted to the set $Z_0=X_0\cap Y_0$ is a branched covering. We may ask that $(\overline{U}\times\partial V)\cap Z_0=\varnothing$. Write $Z_\nu:=X_\nu\cap Y_\nu\cap W$. Then, by the convergence, for all indices large enough, $(\overline{U}\times\partial V)\cap Z_\nu=\varnothing$, whereas $Z_\nu\neq\varnothing$. 

This means that any such $Z_\nu$ projects properly on $U$. Therefore, if we pick a point $z\in Z_\nu$ and an arbitrarily small polydisc around it, then by the Remmert Proper Map Theorem, $\dim_z Z_\nu\leq p+q-m$. This implies that all the $Z_\nu$'s have pure dimension $p+q-m$. 

Since any subsequence of $X_\nu\cap Y_\nu$ converges to $X_0\cap Y_0$ the proof is accomplished.
\end{proof}

Finally, we briefly recall the notion of \textit{c-holomorphic functions} (cf. \cite{R} and \cite{Wh}) i.e. complex continuous functions that are defined on an analytic set $A$ and holomorphic at its regular points $\mathrm{Reg} A$. We denote by $\mathcal{O}_c(A)$ their ring for a fixed $A$. Their study from the geometric point of view was carried to some extent in \cite{D1}--\cite{D4}. They share many a property of holomorphic functions, though they form a larger class without really useful differential properties. Their main feature is the fact that they are characterized among all the continuous functions $A\to {\C}$ by the analycity of their graphs (see \cite{Wh}). That allows the use of geometric methods. In particular there is an identity principle on irreducible sets (cf. \cite{D2}) and we can consider the \textit{order of vanishing} (see \cite{D1} where it is introduced and studied) at a point $f(a)=0$ (when $f\not\equiv 0$) as 
$$
\mathrm{ord}_a f:=\max\{\eta>0\mid |f(x)|\leq\mathrm{const.}||x||^\eta, \textrm{in a neighbourhood of}\ a\in A\}.
$$

\section{Continuity principle}

\begin{lem}
Let $E\subset{\Rz}^k_t\times{\Rz}^n_x$ be a closed, nonempty set with continuously varying sections $E_t$ over $F:=\pi(E)$ where $\pi(t,x)=t$. Then the function
$$
\delta(t,x):=\mathrm{dist}(x,E_t),\quad (t,x)\in F\times{\Rz}^n
$$
is continuous.
\end{lem}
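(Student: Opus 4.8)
The plan is to establish continuity via sequential continuity, which is legitimate here since $F\subset\Rz^k$ and $\Rz^n$ are metrizable, so $F\times\Rz^n$ is first countable. Accordingly I fix $(t_0,x_0)\in F\times\Rz^n$ and a sequence $(t_\nu,x_\nu)\to(t_0,x_0)$ with $t_\nu\in F$, and aim to prove $\delta(t_\nu,x_\nu)\to\delta(t_0,x_0)$. The hypothesis of continuously varying fibres means $E_{t_\nu}\stackrel{K}{\to}E_{t_0}$, and by the preceding Lemma (applicable as $F$ is first countable) this unpacks into two sequential conditions: an inner one, that every $y\in E_{t_0}$ is a limit of points $y_\nu\in E_{t_\nu}$, and an outer one, that any limit of points $y_\nu\in E_{t_\nu}$ lies in $E_{t_0}$. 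Note that for $t\in F$ the section $E_t$ is nonempty by definition of $F=\pi(E)$ and closed as a section of the closed set $E$, so $\delta(t,x)$ is finite and the infimum defining it is attained. I shall split the desired convergence into the two inequalities $\limsup_\nu\delta(t_\nu,x_\nu)\le\delta(t_0,x_0)$ and $\liminf_\nu\delta(t_\nu,x_\nu)\ge\delta(t_0,x_0)$.

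For the upper estimate I would use the inner condition. Pick $y_0\in E_{t_0}$ realizing $\|x_0-y_0\|=\delta(t_0,x_0)$; by the inner condition there exist $y_\nu\in E_{t_\nu}$ with $y_\nu\to y_0$. Then
\[
\delta(t_\nu,x_\nu)\le\|x_\nu-y_\nu\|\lto\|x_0-y_0\|=\delta(t_0,x_0),
\]
which yields the $\limsup$ inequality at once.

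For the lower estimate I would use the outer condition together with a compactness argument, and this is where the only real work lies. Choose $y_\nu\in E_{t_\nu}$ with $\|x_\nu-y_\nu\|=\delta(t_\nu,x_\nu)$, and pass to a subsequence along which $\delta(t_\nu,x_\nu)$ tends to $L:=\liminf_\nu\delta(t_\nu,x_\nu)$. Since $x_\nu\to x_0$ is bounded and $\|y_\nu\|\le\|x_\nu\|+\|x_\nu-y_\nu\|$, the points $y_\nu$ stay in a fixed ball, so a further subsequence satisfies $y_\nu\to y_*$ for some $y_*\in\Rz^n$. The outer condition now forces $y_*\in E_{t_0}$, whence $\delta(t_0,x_0)\le\|x_0-y_*\|=\lim\|x_\nu-y_\nu\|=L$. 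Combining the two inequalities gives $\delta(t_\nu,x_\nu)\to\delta(t_0,x_0)$, and as the sequence was arbitrary, $\delta$ is continuous.

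The main obstacle, as indicated, is controlling the nearest-point sequence $y_\nu$ in the lower estimate: I must guarantee that these projections exist, which is supplied by closedness of the sections, and that they remain bounded, which is supplied by the already-established $\limsup$ bound, before the outer Kuratowski condition can be invoked on a convergent subsequence. A minor point to dispose of separately is the case where $t_0$ is an isolated point of $F$, for which the Kuratowski convergence is vacuous; there $t_\nu=t_0$ eventually and the claim reduces to the classical fact that $x\mapsto\mathrm{dist}(x,E_{t_0})$ is $1$-Lipschitz.
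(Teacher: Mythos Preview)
Your proof is correct. It differs from the paper's in its overall strategy. The paper observes that $\delta(t,\cdot)$ is $1$-Lipschitz \emph{uniformly in $t$}, so that $\lim_{x\to x_0}\delta(t,x)=\delta(t,x_0)$ uniformly in $t$; it then invokes an Iterated Limits Theorem to reduce the joint continuity to continuity of $t\mapsto\delta(t,x_0)$ alone, which is checked via the $\varepsilon$-ball characterisation of Kuratowski convergence (the set $\mathbb{B}(x_0,d+\varepsilon)$ meets $E_{t_0}$ while $\overline{\mathbb{B}}(x_0,d-\varepsilon)$ does not, and this persists for nearby $t$). Your argument instead attacks joint sequential continuity directly, pairing the inner Kuratowski condition with a chosen nearest point in $E_{t_0}$ for the $\limsup$ bound, and the outer condition together with Bolzano--Weierstra\ss{} on the nearest-point sequence $y_\nu\in E_{t_\nu}$ for the $\liminf$ bound. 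What the paper's route buys is generality: it never needs the distance to $E_t$ to be attained, nor any compactness in the target, so it carries over verbatim to an arbitrary metric space in place of $\Rz^n$ (as the paper's subsequent Remark exploits). Your route is more elementary in that it avoids the Iterated Limits machinery, but it leans on features specific to $\Rz^n$ (closed sets realise distances, bounded sequences have convergent subsequences); this is perfectly adequate for the lemma as stated, though it would require modification in the more general setting.
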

\begin{proof}
The function $\delta(t,\cdot)$ is $1$-Lipschitz which means that $\lim_{x\to x_0}\delta(t,x)=\delta(t,x_0)$ is uniform with respect to $t$. Therefore, in view of the Iterated Limits Theorem, we need only to check that $t\mapsto \delta(t,x)$ is continuous for all $x$. Indeed, then 
$$
\lim_{(t,x)\to (t_0,x_0)}\delta(t,x)=\lim_{x\to x_0}\delta(t_0,x)=\delta(t_0,x_0).
$$

Fix $(t_0,x_0)$. We know that $E_t\to E_{t_0}$ in the sense of Kuratowski. Then let $d:=d(x_0,E_{t_0})$. In particular, for any $\varepsilon>0$,
$$
\mathbb{B}(x_0,d+\varepsilon)\cap E_{t_0}\neq\varnothing\>\textrm{and}\> \overline{\mathbb{B}}(x_0,d-\varepsilon)\cap E_{t_0}=\varnothing.\leqno{(K)}
$$
Then, the convergence implies (cf. \cite{DD} Lemma 2.1) that for all $t$ sufficiently close to $t_0$, condition $(K)$ holds for $E_t$ instead of $E_{t_0}$. That in turn implies that for all such $t$,
$$
d-\varepsilon<\mathrm{dist}(x_0,E_t)<d+\varepsilon
$$
and the proof is complete.
\end{proof}
\begin{rem}
Of course, the lemma is true for a product of metric spaces. In particular we can replace the parameter space ${\Rz}^k$ by a 1st countable topological space $T$, since for such a $T$ the following general Iterated Limits Theorem holds (\footnote{We do not have a reference for this fact, but the proof is obvious.}): if $f\colon T\times X\to Y$ where $X,Y$ are metric spaces with $Y$ complete, is such that \begin{itemize}
\item $\exists \lim_{t\to t_0} f(t,x)=\varphi(x)$ for any $x\in X$;
\item $\exists \lim_{x\to x_0} f(t,x)=\psi(t)$ uniformly in $t$,
\end{itemize}
then there exists $\lim_{(t,x)\to (t_0,x_0)}f(t,x)=\lim_{x\to x_0} f(t_0,x)=\psi(t_0)$.
\end{rem}

\begin{prop}\label{odl}
Consider a pure $(n+k)$-dimensional analytic set $A\subset {\C}^{p}\times D$ with proper projection $\pi(t,z,w)=(t,z)$ onto the product domain $D=U\times V\subset {\C}^n\times{\C}^k$. Then \begin{enumerate}
\item The sections $A_t$ vary continuously;
\item The function $\delta\colon {\C}^p\times D\ni (t,x)\mapsto \mathrm{dist}(x,A_t)\in{\Rz}$ is  continuous. 
\end{enumerate}
\end{prop}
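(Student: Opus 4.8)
The plan is to deduce (2) from (1) together with the distance-continuity Lemma proved at the beginning of this section. Indeed, $A$, being analytic in the open set ${\C}^p\times D$, is relatively closed there, and its sections $A_t$ project properly into $V$; so once we know that the $A_t$ vary continuously in the sense of Kuratowski, the distance Lemma — in the form given by the Remark following it, valid because ${\C}^p$ is metric and hence 1st countable — applies verbatim with parameter space ${\C}^p$ and yields the continuity of $\delta(t,x)=\mathrm{dist}(x,A_t)$ on ${\C}^p\times D$. Thus the whole content lies in part (1).

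For (1) I would verify the two sequential conditions characterising $A_t\stackrel{K}{\longrightarrow}A_{t_0}$ supplied by the countable-basis Lemma above; this is legitimate since ${\C}^p$ is 1st countable. The outer condition is immediate from closedness: if $t_\nu\to t_0$ and $A_{t_\nu}\ni a_\nu\to a$, then $(t_\nu,a_\nu)\in A$ converges to $(t_0,a)$, whence $(t_0,a)\in A$, i.e. $a\in A_{t_0}$. The inner condition is the real point: given $a=(z_0,w_0)\in A_{t_0}$ and an arbitrary sequence $t_\nu\to t_0$, I must produce points $a_\nu\in A_{t_\nu}$ with $a_\nu\to a$. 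Here I would exploit that $\pi(t,z,w)=(t,z)$ is proper: the fibre of $\pi$ over $(t_0,z_0)$ is a compact analytic subset of $V$, hence finite, so I may fix a polydisc $\Delta\ni w_0$ in $V$ and a polydisc $P\ni(t_0,z_0)$ such that $w_0$ is the only point of that fibre lying in $\overline{\Delta}$ and such that, by the local normal form of proper analytic projections (branched coverings), $A\cap(P\times\Delta)$ is a branched covering of $P$. Since the projection is \emph{onto}, this local cover is surjective over $P$, so for $\nu$ large $(t_\nu,z_0)\in P$ lies under some $a_\nu=(z_0,w_\nu)\in A_{t_\nu}\cap(P\times\Delta)$. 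Properness keeps the $w_\nu$ in $\overline{\Delta}$; every cluster value $w_*$ satisfies $(t_0,z_0,w_*)\in A$ with $w_*\in\overline{\Delta}$, and the choice of $\Delta$ forces $w_*=w_0$, so the whole sequence $a_\nu\to a$.

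The main obstacle is precisely the nonemptiness of the nearby fibres used in the inner condition — equivalently, the surjectivity of the local branched covering onto its base $P$. This is exactly what the properness hypothesis (together with the assumption that $\pi$ is \emph{onto}) is for: properness guarantees that the fibres are finite, so that a separating box $\Delta$ exists, while the local description of proper, surjective projections of a pure-dimensional analytic set guarantees that $\pi$ restricts to an honest covering over $P$, so the fibres over the points $(t_\nu,z_0)$ cannot suddenly become empty. Once this surjectivity is secured the selection and convergence of the $a_\nu$ is a routine compactness argument, and feeding the resulting Kuratowski continuity of the sections into the distance Lemma closes part (2).
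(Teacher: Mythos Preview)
Your argument is correct and follows essentially the same route as the paper: deduce (2) from (1) via the distance Lemma, and for (1) obtain upper semi-continuity from closedness and lower semi-continuity from the branched-covering structure of $\pi|_A$. The paper's proof is terser --- it simply says that the fibres $\pi^{-1}(\pi(t_\nu,x))\cap A$ converge to $\pi^{-1}(\pi(t_0,x))\cap A$ because $\pi$ is a branched covering --- whereas you unpack this into an explicit separating-box and compactness argument; but the underlying idea is identical.
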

\begin{proof}
Since $A$ is closed, the sections $A_t$ are upper semi-continuous, by \cite{DD} Proposition 2.7, i.e. for any $t_0$,
$$
\limsup_{t\to t_0} A_t\subset A_{t_0}.
$$
We need to check that $A_{t_0}\subset\liminf_{t\to t_0} A_t$. This amounts to proving that for any $x\in A_{t_0}$ and any $t_\nu\to t_0$ we can find points $x_\nu \in A_{t_\nu}$ converging to $x$. Since $\pi$ is a branched covering on $A$, we see that the fibres $\pi^{-1}(\pi(t_\nu,x))\cap A$ converge to the fibre $\pi^{-1}(\pi(t_0,x))\cap A$ containing $(t_0,x)$ which gives exactly what we need and the proof of (1) is complete.

Now (2) follows from the previous lemma.
\end{proof}
\begin{rem}\label{uwaga}
We stress once again that (2) is a simple consequence of (1).
\end{rem}

\begin{lem}\label{jstcg}
Let $T$ be a locally compact topological space and $X\subset{\C}^m$ a nonempty set. If $f\colon T\times X\to {\C}$ is continuous and we write $f_t(x)=f(t,x)$, then $t\to t_0$ in $T$ implies the convergence of graphs:
$$
\Gamma_{f_t}\stackrel{K}{\longrightarrow}\Gamma_{f_{t_0}}.
$$
\end{lem}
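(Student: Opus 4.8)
The plan is to verify Kuratowski convergence through its sequential description, which is available because $T$ is $1$st countable (this is the observation recorded just after Remark~\ref{zb}). I would apply it to the set $E=\{(t,x,w)\mid x\in X,\ w=f(t,x)\}$ inside $T\times(X\times\C)$, whose sections are precisely the graphs $E_t=\Gamma_{f_t}$; since $X\neq\varnothing$ every section is nonempty, so $F=\pi(E)=T$ and $t_0$ is legitimately an accumulation point. Convergence of the graphs is to be taken in the relative topology of $X\times\C$, i.e. the admissible limit points are those lying in $X\times\C$ --- the natural choice here, as $X$ itself need not be closed. Two things must then be checked: the inner condition (approximability of every point of $\Gamma_{f_{t_0}}$) and the outer condition (every accessible limit point lies in $\Gamma_{f_{t_0}}$).

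For the inner condition I would fix $a\in X$, put $w:=f(t_0,a)$, and take an arbitrary sequence $t_\nu\to t_0$. The obvious candidate already works: freeze the first coordinate, $x_\nu:=a$, and set $w_\nu:=f(t_\nu,a)$. Then $(x_\nu,w_\nu)\in\Gamma_{f_{t_\nu}}$ because $a\in X$, and continuity of $f$ in the parameter --- a trivial consequence of its joint continuity --- gives $w_\nu\to f(t_0,a)=w$, hence $(x_\nu,w_\nu)\to(a,w)$. This half therefore costs nothing beyond separate continuity in $t$.

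For the outer condition I would suppose $t_\nu\to t_0$ and $(x_\nu,w_\nu)\in\Gamma_{f_{t_\nu}}$, so that $w_\nu=f(t_\nu,x_\nu)$, with $(x_\nu,w_\nu)\to(x,w)$ for some admissible limit $(x,w)\in X\times\C$. Then $x_\nu\to x$ in $X$ and $t_\nu\to t_0$ in $T$, whence $(t_\nu,x_\nu)\to(t_0,x)$ in the product $T\times X$; joint continuity of $f$ now forces $w_\nu=f(t_\nu,x_\nu)\to f(t_0,x)$, and by uniqueness of limits $w=f(t_0,x)$. Thus $(x,w)=(x,f_{t_0}(x))\in\Gamma_{f_{t_0}}$, which is exactly the required inclusion.

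The one point I would handle with care --- and the only real obstacle --- is the ambient space used in the outer condition. Were one to work in the full $\C^{m+1}$ while $X$ is, say, an open domain, the outer limit would also collect boundary points $x\in\overline{X}\setminus X$ and would produce the closure of the graph instead of $\Gamma_{f_{t_0}}$ itself; viewing each $\Gamma_{f_t}$ as a closed subset of $X\times\C$ removes this entirely and is consistent with Remark~\ref{zb}, where graph convergence is matched with local uniform convergence over a fixed domain. Of the standing hypotheses on $T$, only $1$st countability is genuinely invoked (to license the sequential reformulation); local compactness plays no role in this particular lemma, being reserved for the geometric arguments that follow.
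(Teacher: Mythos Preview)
Your proof is correct and takes a genuinely different route from the paper's. The paper invokes Remark~\ref{zb} to reduce the statement to the local uniform convergence $f_{t_\nu}\to f_{t_0}$ on $X$, and then proves the latter by a standard compactness argument: for a compact $K\subset X$ it covers $\{t_0\}\times K$ by finitely many neighbourhoods on which $f$ oscillates by less than $\varepsilon$, and intersects the $T$-factors to obtain a single neighbourhood of $t_0$ on which $|f(t,x)-f(t_0,x)|$ is uniformly small. You instead verify the two clauses of Kuratowski convergence directly via the sequential characterization, freezing the base point for the inner condition and using joint continuity for the outer one. Your argument is shorter and isolates exactly what is needed (joint continuity plus 1st countability); the paper's route has the side benefit of actually establishing the locally uniform convergence of the $f_t$, which is reused later, and its covering argument does not itself rely on 1st countability. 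Your closing remark that local compactness of $T$ is not used here is accurate---it is indeed reserved for subsequent results.
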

\begin{proof}
Note that the graphs in question are pure $k$-dimensional sets (cf. \cite{D1}). 
In view of Remark \ref{zb} we need only to check that for any $t_\nu\to t_0$, $f_{t_\nu}\to f_{t_0}$ locally uniformly on $X$. Take a compact set $K\subset X$. Then $K'=\{t_0\}\times K$ is compact and for a fixed $\varepsilon$ and any $x\in K$ we find neighbourhoods $U_x\times \mathbb{B}(x,r_x)$ of $(t_0,x)$ at points $(t,y)$ of which 
$$
|f(t,y)-f(t_0,x)|<\varepsilon.
$$
By compacity we choose a finite covering $K'\subset\bigcup_{i=1}^pU_i\times\mathbb{B}(x_i,r_i)$ and put $U:=\bigcap_{i=1}^p U_i$. then for any $(t,x)\in U\times K$ we have $(t,x)\in U_i\times\mathbb{B}(x_i,r_i)$ for some $i$ and so
$$
|f(t,x)-f(t_0,x)|\leq \varepsilon.
$$
This ends the proof.
\end{proof}

\begin{prop}
Let $T$ be a locally compact, connected topological space, $A$ a pure $k$-dimensional analytic subset of some open set $\Omega\subset{\C}^m$ and $f\colon T\times A\to {\C}$ a continuous function such that for each $t\in T$, $f_t(x):=f(x,t)$ is c-holomorphic on $A$. Then $t\to t_0$ in $T$ implies
$$
\Gamma_{f_t}\stackrel{T}{\longrightarrow} \Gamma_{f_{t_0}}.
$$
\end{prop}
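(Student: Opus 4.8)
The plan is to verify the two defining conditions of Tworzewski convergence for the cycles $[\Gamma_{f_t}]$, each irreducible component of which carries multiplicity one. The convergence of supports, $\Gamma_{f_t}\stackrel{K}{\longrightarrow}\Gamma_{f_{t_0}}$, is precisely the content of Lemma \ref{jstcg}, so the whole difficulty lies in the intersection-number condition. By the Remark following the definition of Tworzewski convergence it is enough to test at a dense subset of $\mathrm{Reg}\,\Gamma_{f_{t_0}}$. The projection $\rho\colon{\C}^m\times{\C}\to{\C}^m$ restricts to a homeomorphism $\Gamma_{f_{t_0}}\to A$ that is biholomorphic over $\mathrm{Reg}\,A$; hence the points of the graph lying over $\mathrm{Reg}\,A$ form a dense subset of $\mathrm{Reg}\,\Gamma_{f_{t_0}}$, and I would restrict attention to such points, where the graph is a genuine $k$-dimensional complex manifold.

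Fix therefore $x_0\in\mathrm{Reg}\,A$, put $a=(x_0,f_{t_0}(x_0))$, and take a testing manifold $M$ for $\Gamma_{f_{t_0}}$ at $a$, of complementary dimension $m+1-k$. Since $a$ is a smooth point of the graph and $M$ meets it transversally with $\overline M\cap\Gamma_{f_{t_0}}=\{a\}$, the canonical complex orientations give $\deg(\Gamma_{f_{t_0}}\cdot M)=1$. The goal is then to show $\deg(\Gamma_{f_t}\cdot M)=1$ for all $t$ close to $t_0$. Two things must be controlled: intersection points of $\Gamma_{f_t}\cap M$ far from $a$, and the behaviour near $a$. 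For the first, the compact set $\overline M\setminus\mathbb{B}(a,\varepsilon)$ is disjoint from $\Gamma_{f_{t_0}}$, so by the Kuratowski convergence of Lemma \ref{jstcg} (its upper semicontinuity part) it stays disjoint from $\Gamma_{f_t}$ for all $t$ near $t_0$; thus no intersection points escape a fixed small ball around $a$.

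The key step is the local analysis near $a$, and here the point is to upgrade the uniform convergence $f_t\to f_{t_0}$ to \emph{smooth} convergence of the graphs. Choosing a holomorphic chart in which $A$ becomes an open subset of ${\C}^k$ near $x_0$, the functions $f_t$ become genuinely holomorphic there; being uniformly bounded on a compact neighbourhood (continuity of $f$) and uniformly convergent by Lemma \ref{jstcg}, they converge together with all their derivatives by the Cauchy estimates. Consequently $\Gamma_{f_t}\to\Gamma_{f_{t_0}}$ in the $C^1$ (indeed $C^\infty$) sense near $a$. As transversality is an open condition, stable under $C^1$-small perturbations, for $t$ close to $t_0$ the manifold $M$ meets $\Gamma_{f_t}$ transversally in exactly one point near $a$, again with local index $1$. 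Combining the two controls yields $\deg(\Gamma_{f_t}\cdot M)=1=\deg(\Gamma_{f_{t_0}}\cdot M)$ for all $t$ sufficiently close to $t_0$, which is the required condition; Proposition \ref{wlasciwosc} (or directly the local count) guarantees in addition that these intersections are proper.

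The main obstacle is exactly this passage from the coarse Kuratowski convergence of supports to the finer conservation of intersection numbers: a priori uniform convergence says nothing about multiplicities, and the decisive observation is that over $\mathrm{Reg}\,A$ the holomorphy of the $f_t$ forces smooth convergence of the graphs, so that a transversal intersection of index one can neither split, nor disappear, nor change multiplicity. Restricting to the dense set of regular points lying over $\mathrm{Reg}\,A$ is what lets me sidestep the genuinely singular behaviour of the graph over $\mathrm{Sing}\,A$, where no such smooth-convergence argument is available.
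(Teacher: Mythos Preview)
Your proof is correct and follows essentially the same route as the paper's: both reduce to the dense set of graph points lying over $\mathrm{Reg}\,A$, invoke Lemma~\ref{jstcg} for Kuratowski convergence, and then exploit that locally uniform convergence of holomorphic functions entails convergence of derivatives (hence of tangent spaces) to preserve transversality with the testing manifold and force the intersection degree to remain equal to~$1$. Your version is somewhat more explicit about ruling out stray intersection points of $M\cap\Gamma_{f_t}$ away from $a$ via upper semicontinuity, a point the paper leaves implicit in the condition $\overline{M}\cap|Z_0|=\{a\}$.
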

\begin{proof}
By Lemma \ref{jstcg} we have $$\Gamma_{f_t}\stackrel{K}{\longrightarrow}\Gamma_{f_{t_0}}.$$ This means that on $\mathrm{Reg} A$, for any $t_\nu\to t_0$, we have a sequence of holomorphic functions converging locally uniformly.

Now, observe that for any $g\in\mathcal{O}_c(A)$, $\Gamma_{g|_\mathrm{Reg}A}\subset\mathrm{Reg}\Gamma_g$ is dense. For a testing $M$ at $a\in \Gamma_{f_{t_0}|_\mathrm{Reg}A}$ we have the equality $M\cap T_a\Gamma_{f_{t_0}}=\{0\}$ where $T_a\Gamma_{f_{t_0}}$ denotes the tangent space at $a$, and so $\mathrm{deg}(M\cdot\Gamma_{f_{t_0}})=1$. But since in the holomorphic case, the local  uniform convergence is a convergence with the tangents, we easily conclude that for sufficiently large indices $\nu$, $M$ is transversal to the manifold (near $a$) $\Gamma_{f_t}$ and so $\mathrm{deg}(M\cdot \Gamma_{f_t})=1$, too (there are no multiplicities attached to the graphs). To be somewhat more precise, if $a=(a',f_{t_0}(a'))$, then $$T_{(a',f_{t_\nu}(a'))}\Gamma_{f_{t_\nu}}\stackrel{K}{\longrightarrow}T_{(a',f_{t_0}(a'))}\Gamma_{f_{t_0}}$$
and we apply \cite{TW} to conclude that $M$ intersects $\Gamma_{f_t}$ transversally.
\end{proof}

Recall (cf. \cite{D1}-- \cite{D3}) that if $f\in\mathcal{O}_c(A)$ does not vanish identically on any irreducible component of $A$, where $A$ is a pure $k$-dimensional analytic subset of a domain $D\subset{\C}^m$, then we define the \textit{cycle of zeroes} as the Draper proper intersection  cycle (\cite{Dr})
$$
Z_f:=\Gamma_f\cdot (D\times\{0\}).
$$
In the same way we may define the \textit{fibre cycle}, namely
$$
[f^{-1}(f(a))]:=\Gamma_f\cdot(D\times\{f(a)\})
$$
and consider this as a cycle in $D$.

Now we can state the following Hurwitz-type theorem:
\begin{thm}\label{zera}
Let $T$ be a connected topological space, $A$ a pure $k$-dimensional analytic subset of some domain $D\subset{\C}^m$, $f\colon T\times A\to {\C}$ a continuous function such that for each $t\in T$, $f_t(x):=f(x,t)$ is c-holomorphic on $A$. Then if $f_{t_0}\not\equiv 0$ on any irreducible component of $A$ and $f_{t_0}^{-1}(0)\neq\varnothing$, we have 
$$
Z_{f_t}\stackrel{T}{\longrightarrow} Z_{f_{t_0}},\> t\to t_0.
$$
\end{thm}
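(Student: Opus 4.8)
The plan is to deduce the statement from the convergence of graphs in the sense of Tworzewski established in the preceding Proposition, by showing that intersecting with the fixed hyperplane $D\times\{0\}$ is a continuous operation. Since $T$ is 1st countable I may fix a sequence $t_\nu\to t_0$ and work in the ambient $D\times\C\subset\C^{m+1}$, where the graphs $\Gamma_{f_{t_\nu}},\Gamma_{f_{t_0}}$ are pure $k$-dimensional, $D\times\{0\}$ is pure $m$-dimensional, and by definition $Z_{f_t}=\Gamma_{f_t}\cdot(D\times\{0\})$.

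First I would settle properness and the convergence of supports. As $f_{t_0}\not\equiv 0$ on any irreducible component of $A$, its zero set is a proper analytic subset of each component, so $\Gamma_{f_{t_0}}\cap(D\times\{0\})$ is pure $(k-1)$-dimensional, i.e. the intersection defining $Z_{f_{t_0}}$ is proper. Since $\Gamma_{f_{t_\nu}}\stackrel{K}{\longrightarrow}\Gamma_{f_{t_0}}$ by Lemma \ref{jstcg} while $D\times\{0\}$ is constant, Proposition \ref{wlasciwosc} yields that $\Gamma_{f_{t_\nu}}\cap(D\times\{0\})$ is proper of pure dimension $k-1$ for all large $\nu$; in particular $f_{t_\nu}\not\equiv 0$ on any component of $A$ (otherwise the intersection would carry a $k$-dimensional piece), so the cycles $Z_{f_{t_\nu}}$ are well defined near $t_0$. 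Moreover \cite{TW} gives $\Gamma_{f_{t_\nu}}\cap(D\times\{0\})\stackrel{K}{\longrightarrow}\Gamma_{f_{t_0}}\cap(D\times\{0\})$, which under the homeomorphic projection $(x,0)\mapsto x$ reads $f_{t_\nu}^{-1}(0)\stackrel{K}{\longrightarrow} f_{t_0}^{-1}(0)$, that is $|Z_{f_{t_\nu}}|\stackrel{K}{\longrightarrow}|Z_{f_{t_0}}|$.

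The heart of the matter is the stabilisation of intersection degrees. By \cite{T} Lemma 3.2 it suffices to test at a dense subset of $\mathrm{Reg}\,|Z_{f_{t_0}}|$, and I would choose points $a\in\mathrm{Reg}\,A\cap\mathrm{Reg}\,f_{t_0}^{-1}(0)$, so that the lift $(a,0)$ is a regular point of $\Gamma_{f_{t_0}}$ (there $f_{t_0}$ is genuinely holomorphic). Given a testing manifold $M\subset D$ for $Z_{f_{t_0}}$ at $a$, of dimension $m-k+1$, I would pass to $M\times\{0\}$, of complementary dimension $(m+1)-k$ to the graph. A short tangent-space computation shows that transversality of $M$ to $f_{t_0}^{-1}(0)$ at $a$ is equivalent to transversality of $M\times\{0\}$ to $\Gamma_{f_{t_0}}$ at $(a,0)$, while $\overline{M}\cap f_{t_0}^{-1}(0)=\{a\}$ forces $\overline{M\times\{0\}}\cap\Gamma_{f_{t_0}}=\{(a,0)\}$; hence $M\times\{0\}$ is a legitimate testing manifold for the graph. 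Associativity of the Draper intersection product, using $M\times\C$ as an intermediary so that $(D\times\{0\})\cdot(M\times\C)=M\times\{0\}$ transversally, then gives for all $t$ close enough to $t_0$
$$\mathrm{deg}(Z_{f_t}\cdot M)=\mathrm{deg}\big(\Gamma_{f_t}\cdot(M\times\{0\})\big).$$
The already proven convergence $\Gamma_{f_t}\stackrel{T}{\longrightarrow}\Gamma_{f_{t_0}}$, tested against $M\times\{0\}$, makes the right-hand side eventually equal to $\mathrm{deg}(\Gamma_{f_{t_0}}\cdot(M\times\{0\}))=\mathrm{deg}(Z_{f_{t_0}}\cdot M)$. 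Together with the convergence of supports this is precisely $Z_{f_t}\stackrel{T}{\longrightarrow}Z_{f_{t_0}}$.

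The step I expect to be most delicate is the reduction to graph testing manifolds: one must guarantee that the regular points of $|Z_{f_{t_0}}|$ used for testing can be taken above $\mathrm{Reg}\,A$, so that the graph is smooth there and the transversality computation applies, i.e. that no component of $f_{t_0}^{-1}(0)$ is buried in $\mathrm{Sing}\,A$. This is where the geometry of c-holomorphic graphs must be combined with the density statement \cite{T} Lemma 3.2. The verification of associativity through the auxiliary manifold $M\times\C$, checking that every intermediate intersection stays proper, is the other point requiring care, although it reduces to the dimension bookkeeping already set up above.
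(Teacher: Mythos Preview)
Your argument and the paper's share the same opening move: both start from the Tworzewski convergence of graphs $\Gamma_{f_t}\stackrel{T}{\longrightarrow}\Gamma_{f_{t_0}}$ supplied by the preceding Proposition, and both note that the intersection $\Gamma_{f_{t_0}}\cap(D\times\{0\})$ is proper of dimension $k-1$ because $f_{t_0}\not\equiv 0$ on any component. From that point on, however, the paper does not unfold the definition of Tworzewski convergence for the intersection cycles. It simply invokes \cite{T} Lemma~3.5, which states in full generality that if $Z_\nu\stackrel{T}{\longrightarrow}Z_0$ and $Y$ is a fixed analytic set meeting $|Z_0|$ properly, then $Z_\nu\cdot Y\stackrel{T}{\longrightarrow}Z_0\cdot Y$. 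Applied with $Z_\nu=\Gamma_{f_{t_\nu}}$ and $Y=D\times\{0\}$, this is the whole proof.

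What you do instead is essentially rederive that lemma in this particular situation: you verify Kuratowski convergence of supports via \cite{TW} and Proposition~\ref{wlasciwosc}, then lift testing manifolds $M$ for $Z_{f_{t_0}}$ to testing manifolds $M\times\{0\}$ for the graph and use associativity of the Draper product. This is a perfectly sound strategy and makes the mechanism transparent, but it costs you exactly the technical point you flag at the end: to test the graph you need $(a,0)\in\mathrm{Reg}\,\Gamma_{f_{t_0}}$, which you obtain by restricting to $a\in\mathrm{Reg}\,A$, and you then have to argue that $\mathrm{Reg}\,f_{t_0}^{-1}(0)\cap\mathrm{Reg}\,A$ is still dense in $\mathrm{Reg}\,|Z_{f_{t_0}}|$. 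This is not entirely trivial when $\mathrm{Sing}\,A$ is $(k-1)$-dimensional. The paper sidesteps the issue completely because \cite{T} Lemma~3.5 is proved once and for all at the level of cycles (via local degrees and the Stoll formula rather than pointwise transversality), so no regularity bookkeeping on $A$ is needed here.

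In short: your route is correct and more explicit, the paper's is a one-line citation; the trade-off is that you inherit a density verification that the black-box lemma has already absorbed.
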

\begin{proof}
By the previous Proposition we have 
$$
\Gamma_{f_t}\stackrel{T}{\longrightarrow} \Gamma_{f_{t_0}}.
$$
Of course, $f_{t_0}^{-1}(0)$ is a hypersurface (cf. the identity principle from \cite{D2}) which means that the intersection $\Gamma_{f_{t_0}}\cap (D\times\{0\})$ is proper (i.e of the minimal dimension possible: $k-1$). By \cite{T} Lemma 3.5 we conclude that for any sequence $t_\nu\to t_0$, 
$$
\Gamma_{f_{t_\nu}}\cdot(D\times\{0\})\stackrel{T}{\longrightarrow} \Gamma_{f_{t_0}}\cdot(D\times\{0\}).
$$
This ends the proof.
\end{proof}

\begin{cor}
Let $g\in\mathcal{O}_c(A)$, $g\neq\mathrm{const.}$ on any irreducible component of $A\subset D$, where $A$ is pure $k$-dimensional. Then for any $t_0\in A$,
$$
[g^{-1}(t)]\stackrel{T}{\longrightarrow}[g^{-1}(t_0)],\> t\to t_0.
$$
\end{cor}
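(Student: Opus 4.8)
The plan is to reduce this to the Hurwitz-type Theorem~\ref{zera} by the standard device of rewriting the level sets of the single function $g$ as the zero sets of a one-parameter \emph{family} of c-holomorphic functions. I would take the parameter space to be $T=\C$, which is connected (and 1st countable), and define $f\colon\C\times A\to\C$ by $f(t,x):=g(x)-t$, so that $f_t=g-t$. This $f$ is continuous because $g$ is continuous, and each $f_t$ is c-holomorphic on $A$, since subtracting a constant preserves both continuity and holomorphy on $\mathrm{Reg}\,A$.

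Before invoking Theorem~\ref{zera} I would verify its hypotheses for $f_{t_0}=g-t_0$. As $g\neq\mathrm{const.}$ on any irreducible component of $A$, the function $g-t_0$ cannot vanish identically on any such component; and for $t_0$ a value attained by $g$ the set $f_{t_0}^{-1}(0)=g^{-1}(t_0)$ is nonempty, exactly as required.

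The one point that genuinely needs care is the identification $Z_{f_t}=[g^{-1}(t)]$ as cycles in $D$. By definition $Z_{f_t}=\Gamma_{g-t}\cdot(D\times\{0\})$, whereas the fibre cycle is $[g^{-1}(t)]=\Gamma_g\cdot(D\times\{t\})$. I would introduce the translation biholomorphism $\tau_t(x,w):=(x,w-t)$ of $D\times\C$: it carries $\Gamma_g$ onto $\Gamma_{g-t}$ and $D\times\{t\}$ onto $D\times\{0\}$, while commuting with the projection onto $D$. Since the Draper intersection cycle is invariant under biholomorphisms, multiplicities included, this gives $\Gamma_g\cdot(D\times\{t\})=\Gamma_{g-t}\cdot(D\times\{0\})$, read as cycles downstairs in $D$; that is, $[g^{-1}(t)]=Z_{f_t}$.

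With these two observations in hand, Theorem~\ref{zera} applied to the family $(f_t)_{t\in\C}$ yields $Z_{f_t}\stackrel{T}{\longrightarrow}Z_{f_{t_0}}$ as $t\to t_0$, which by the identification is precisely $[g^{-1}(t)]\stackrel{T}{\longrightarrow}[g^{-1}(t_0)]$. The main obstacle is therefore the cycle identification of the preceding paragraph; it is conceptually transparent but must be carried out with the Draper indices and the projection onto $D$ kept firmly in view. One could alternatively bypass the translation by reproving Theorem~\ref{zera} directly for the horizontal sections $D\times\{t\}$, but the reduction above has the advantage of reusing the theorem verbatim.
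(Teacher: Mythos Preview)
Your proof is correct and follows essentially the same approach as the paper: both reduce to Theorem~\ref{zera} via the family $f_t=g-t$ and identify $Z_{f_t}$ with $[g^{-1}(t)]$ by means of the vertical translation automorphism of $D\times\C$ (the paper uses $\Phi(x,s)=(x,s+t)$, the inverse of your $\tau_t$). The paper additionally invokes \cite{D2} to note that all nonempty fibres of $g$ have pure dimension $k-1$, but this is implicit in your verification of the hypotheses of Theorem~\ref{zera}.
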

\begin{proof}
Let $f\colon A\times {\C}\ni (x,t)\mapsto g(x)-t\in{\C}$. By \cite{D2}, we conclude that all the nonempty fibres of $g$ have pure dimension $k-1$. Then $f$ satisfies the assumptions of the preceding Theorem and 
\begin{align*}
Z_{f_t}&=\Gamma_{f_t}\cdot (D\times\{0\})=\\
&=\Gamma_{g}\cdot (D\times\{t\})=\\
&=[g^{-1}(t)],
\end{align*}
since $\Phi(x,s)=(x,s+t)$ is an automorphism of $D\times {\C}$ sending $\Gamma_{f_t}$ to $\Gamma_g$ and $D\times\{0\}$ to $D\times\{t\}$. This ends the proof.
\end{proof}
Before the next corollary recall that for any positive cycle $Z=\sum\alpha_\iota S_\iota$ we define its \textit{local degree} at $a\in|Z|$ as $\mathrm{deg}_a Z:=\sum\alpha_\iota\mathrm{deg}_a S_\iota$, where $\mathrm{deg}_a S_\iota$ is the usual local degree (Lelong number) with the convention that $\mathrm{deg}_a S_\iota=0$ if $a\notin S_\iota$.
\begin{cor}
Under the assumptions of the preceding Theorem suppose in addition that $f_t(a)=0$ for all $t\in T$ and some fixed $a\in A$. Then for all $t$ close enough to $t_0$,
$$
\mathrm{deg}_a Z_{f_t}\leq\mathrm{deg}_a Z_{f_{t_0}},
$$
for the local degrees at $a$.
\end{cor}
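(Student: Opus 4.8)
The plan is to read the asserted inequality as the upper semicontinuity of Lelong numbers along the convergence already furnished by Theorem \ref{zera}. By that theorem $Z_{f_t}\stackrel{T}{\longrightarrow}Z_{f_{t_0}}$ as $t\to t_0$, and, as recalled in the Remark after Tworzewski's definition, this is exactly the weak convergence of the integration currents $[Z_{f_t}]\to[Z_{f_{t_0}}]$. For a positive pure $(k-1)$-dimensional cycle $Z$ the local degree $\mathrm{deg}_a Z$ coincides with the Lelong number of $[Z]$ at $a$ (Thie's theorem), so it suffices to prove $\limsup_{t\to t_0}\mathrm{deg}_a Z_{f_t}\leq\mathrm{deg}_a Z_{f_{t_0}}$. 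Note that $a\in|Z_{f_t}|$ for every $t$ since $f_t(a)=0$, and that the cycles $Z_{f_t}$ are well defined near $a$ for $t$ close to $t_0$: by the local uniform convergence $f_t\to f_{t_0}$ (Lemma \ref{jstcg}), $f_t$ stays $\not\equiv0$ on each of the finitely many components of $A$ passing through $a$.

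First I would run the standard mass argument. Denote by $\sigma_t$ the trace measure of $[Z_{f_t}]$; weak convergence of the currents entails weak convergence $\sigma_t\to\sigma_{t_0}$ of these measures, whence $\sigma_t(\mathbb{B}(a,r))\to\sigma_{t_0}(\mathbb{B}(a,r))$ for every radius $r$ with $\sigma_{t_0}(\partial\mathbb{B}(a,r))=0$. Writing $\tau_{k-1}$ for the volume of the unit ball in ${\C}^{k-1}$, Lelong's monotonicity makes $r\mapsto\sigma_t(\mathbb{B}(a,r))/(\tau_{k-1}r^{2(k-1)})$ non-decreasing with limit $\mathrm{deg}_a Z_{f_t}$ as $r\to0$, so this ratio dominates $\mathrm{deg}_a Z_{f_t}$ for every $r$. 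Letting first $t\to t_0$ along such generic radii and then $r\to0$ gives the desired $\limsup$ bound. Since the local degrees are nonnegative integers and $T$ is 1st countable, this already yields the conclusion: if $\mathrm{deg}_a Z_{f_t}>\mathrm{deg}_a Z_{f_{t_0}}$ held along some $t_\nu\to t_0$, we would get $\mathrm{deg}_a Z_{f_{t_\nu}}\geq\mathrm{deg}_a Z_{f_{t_0}}+1$, contradicting the bound; hence $\mathrm{deg}_a Z_{f_t}\leq\mathrm{deg}_a Z_{f_{t_0}}$ for all $t$ near $t_0$.

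The same inequality can be seen geometrically, in the spirit of Proposition \ref{wlasciwosc}, and this pinpoints the one delicate point. Choose a generic linear projection $\pi$ adapted to $a$ for $Z_{f_{t_0}}$ and neighbourhoods $U\times V$ so small that $\pi|_{Z_{f_{t_0}}}$ is a branched covering with $\pi^{-1}(\pi(a))\cap|Z_{f_{t_0}}|=\{a\}$; then its sheet number equals $\mathrm{deg}_a Z_{f_{t_0}}=:d_0$. By the Kuratowski convergence of supports, $(\overline{U}\times\partial V)\cap|Z_{f_t}|=\varnothing$ for $t$ near $t_0$, so $\pi|_{Z_{f_t}}$ is again a proper branched covering, and the weak continuity of the proper push-forward shows its sheet number is still $d_0$. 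Now $\mathrm{deg}_a Z_{f_t}$ is at most the local covering multiplicity of $\pi|_{Z_{f_t}}$ at $a$, which is at most the total sheet number $d_0$, giving $\mathrm{deg}_a Z_{f_t}\leq d_0=\mathrm{deg}_a Z_{f_{t_0}}$. The main obstacle --- and the very reason the statement is an inequality rather than an equality --- is precisely this last comparison: the degree at $a$ registers only the sheets through $a$, whereas for $t\neq t_0$ the fibre $\pi^{-1}(\pi(a))\cap|Z_{f_t}|$ may split into several points, the extra zeros of $f_t$ coalescing at $a$ only in the limit. Controlling this splitting is exactly the content of the upper semicontinuity of the Lelong number at the (possibly singular) point $a$, supplied either by the monotonicity of the normalized mass or by the elementary bound that the local degree never exceeds the covering multiplicity of a non-generic adapted projection.
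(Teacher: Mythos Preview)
Your proof is correct. Your second, geometric argument is essentially the paper's own: the paper picks an affine subspace $L$ of complementary dimension through $a$ realizing $\mathrm{deg}_a Z_{f_{t_0}}$ --- this is exactly a fibre of your generic projection $\pi$ --- then invokes \cite{T} Lemma 3.5 to get $L\cdot Z_{f_t}\stackrel{T}{\longrightarrow}L\cdot Z_{f_{t_0}}$, which is your ``weak continuity of proper push-forward'' giving constancy of the sheet number; the final chain $\mathrm{deg}_a Z_{f_{t_0}}=\mathrm{deg}(L\cdot Z_{f_t})\geq i(L\cdot Z_{f_t};a)\geq\mathrm{deg}_a Z_{f_t}$ is precisely your ``local degree $\leq$ local covering multiplicity $\leq$ total sheet number''. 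Your first argument, via Thie's theorem and Lelong's monotonicity of the normalized mass, is a genuinely different route: it reads the statement as the classical upper semicontinuity of Lelong numbers under weak convergence of closed positive currents. This is more conceptual and places the inequality in a well-known general framework, but it imports external analytic machinery (trace measures, monotonicity, Thie) that the paper deliberately avoids; the paper's argument stays self-contained within Tworzewski's intersection theory and uses only the stability lemma \cite{T} 3.5 already needed elsewhere.
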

\begin{proof}\label{rzqd}
Take any affine subspace $L$ through $a$, of dimension $m-k+1$ and such that $$L\cdot Z_{f_{t_0}}=\mathrm{deg}_a Z_{f_{t_0}}\cdot \{a\}.$$
Then by Theorem \ref{zera} together with \cite{T} Lemma 3.5,
$$
L\cdot Z_{f_{t}}\stackrel{T}{\longrightarrow}L\cdot Z_{f_{t_0}}
$$
which ends the proof, since $$L\cdot Z_{f_t}=\sum_{b\in L\cap f_t^{-1}(0)}i(L\cdot Z_{f_t},b)\{b\}$$ and for each Draper intersection index (multiplicity) $i(L\cdot Z_{f_t},b)$ we have $$i(L\cdot Z_{f_t},b)\geq \mathrm{deg}_b Z_{f_t},$$ for $\mathrm{deg}_b L=1$. Therefore, we obtain by the convergence, for all $t$ sufficiently close to $t_0$,
\begin{align*}
\mathrm{deg}_a Z_{f_{t_0}}&=\mathrm{deg}(L\cdot Z_{f_{t_0}})=\\
&=\mathrm{deg}(L \cdot Z_{f_t})=\\
&=\sum_{b\in L\cap f_t^{-1}(0)}i(L\cdot Z_{f_t},b)\{b\}\geq\\
&\geq i(L\cdot Z_{f_t},a)\{a\}\geq \mathrm{deg}_ a Z_{f_t},
\end{align*}
as $a\in L\cap f_t^{-1}(0)$ (for all $t$).
\end{proof}

\section{On the \L ojasiewicz inequality and the total degree}

We recall one result from \cite{D3} which is the basis which we shall work upon.
\begin{thm}[\cite{D3} Theorem 2.3]\label{ordLoj}
Let $f\colon\Omega\to{\C}$ be holomorphic in a (connected) neighbourhood $\Omega$ of $0\in {\C}^m$. If $f$ is non-constant and $f(0)=0$ then there is a neighbourhood $U$ of zero such that the following \L ojasiewicz inequality holds:
$$
|f(x)|\geq\mathrm{const.}\mathrm{dist}(x,f^{-1}(0))^{\mathrm{ord}_0 f},\quad x\in U
$$
where $\mathrm{ord}_0 f$ denotes the order of vanishing of $f$ at zero. Moreover, this is the best exponent possible.
\end{thm}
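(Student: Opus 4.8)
The plan is to prove the two halves separately: that the inequality holds with exponent $\alpha:=\mathrm{ord}_0 f$, and that no smaller exponent is admissible. Throughout write $V:=f^{-1}(0)$ and recall that, $f$ being holomorphic, $\alpha$ is the order of the lowest nonvanishing homogeneous term in the Taylor expansion $f=f_\alpha+f_{\alpha+1}+\dots$ at $0$, with $f_\alpha\not\equiv 0$; moreover the zero cone $f_\alpha^{-1}(0)$ is precisely the tangent cone $C_0 V$ of $V$ at $0$. This is the geometric reason the exponent should be $\alpha$.

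First I would reduce the local inequality to its homogeneous model by a rescaling (blow-up) argument. For small $r>0$ set $f^{(r)}(y):=r^{-\alpha}f(ry)$; these are holomorphic on a fixed ball and $f^{(r)}\to f_\alpha$ locally uniformly as $r\to 0$ (since $f^{(r)}(y)=f_\alpha(y)+O(r\|y\|^{\alpha+1})$), while $(f^{(r)})^{-1}(0)=\tfrac1r V\to C_0V$. Since $\mathrm{dist}(ry,V)=r\,\mathrm{dist}(y,\tfrac1r V)$ and $|f(ry)|=r^\alpha|f^{(r)}(y)|$, writing a point near $0$ as $x=ry$ with $y$ in a fixed spherical annulus, the desired bound $|f(x)|\ge c\,\mathrm{dist}(x,V)^\alpha$ becomes $|f^{(r)}(y)|\ge c\,\mathrm{dist}(y,(f^{(r)})^{-1}(0))^\alpha$, required \emph{uniformly in} $r$. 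For the homogeneous limit $f_\alpha$ the inequality with exponent $\alpha$ is accessible: by conic homogeneity it suffices to prove $|f_\alpha(u)|\ge c\,\mathrm{dist}(u,f_\alpha^{-1}(0))^\alpha$ on the unit sphere, where the classical real-analytic \L ojasiewicz inequality already furnishes some exponent, controlled by $\deg f_\alpha=\alpha$ (a polynomial of degree $\alpha$ has order at most $\alpha$ at every point, as one sees by restricting to a generic complex line); combining this with $\mathrm{dist}(x,f_\alpha^{-1}(0))\le\|x\|$ then upgrades the sphere estimate to the exponent $\alpha$ on all of $\C^m$.

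The hard part will be the uniformity in $r$ --- which is exactly a \emph{parametrized} \L ojasiewicz phenomenon --- together with the analysis of the extremal arcs. I would control it as follows: by the curve selection lemma the optimal local exponent of the pair $(|f|,\mathrm{dist}(\cdot,V))$ at $0$ is realised along an analytic arc $\gamma(s)\to y_0\in V$ with $y_0$ close to $0$, and the order quotient along $\gamma$ is governed by the leading form of $f$ at $y_0$, whose degree is $\mathrm{ord}_{y_0}f$. The decisive input is the \emph{upper semicontinuity of the order} of a holomorphic function, which yields $\mathrm{ord}_{y_0}f\le\mathrm{ord}_0 f=\alpha$ once $y_0$ is close enough to $0$; feeding this back through the homogeneous model at $y_0$ bounds the quotient by $\alpha$ and at the same time prevents the constant from degenerating as $r\to 0$. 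I expect the genuinely delicate point to be precisely this: showing that the higher-order remainder $f-f_\alpha$, and the passage from the tangent cone $C_0V$ to $V$ itself, do not raise the exponent, uniformly near $0$.

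Finally, optimality is easy and shows $\alpha$ cannot be lowered. Choosing a direction $u$ with $f_\alpha(u)\ne 0$ (so $u$ lies off the tangent cone) and putting $x=ru$, one has $|f(ru)|\sim r^\alpha|f_\alpha(u)|$, while $\mathrm{dist}(ru,V)\asymp r$ because $\mathrm{dist}(u,\tfrac1r V)\to\mathrm{dist}(u,C_0V)>0$ and $0\in V$. Hence for any $\beta<\alpha$ the quotient $|f(ru)|/\mathrm{dist}(ru,V)^\beta\sim r^{\alpha-\beta}\to 0$ as $r\to 0$, so no exponent smaller than $\alpha$ can work, and $\alpha=\mathrm{ord}_0 f$ is indeed the best possible.
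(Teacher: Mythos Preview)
Your proposal takes a genuinely different route from the one behind the paper (the theorem is quoted from \cite{D3}, but the Remark following Proposition~\ref{kluczowe} tells you the method). That argument is direct and avoids every difficulty you flag: after a linear change of coordinates so that $\{0\}^{m-1}\times\C$ meets $f^{-1}(0)$ only at $0$, Weierstrass preparation gives $f=hP$ on a polydisc $V\times W$ with $h$ a unit and $P(x,t)=t^\alpha+\dots$ a Weierstrass polynomial of degree $\alpha=\mathrm{ord}_0 f$. For generic $x\in V$ one has $P(x,t)=\prod_{i=1}^\alpha (t-t^{(i)}(x))$, whence
\[
|P(x,t)|=\prod_{i=1}^\alpha\|(x,t)-(x,t^{(i)}(x))\|\ \ge\ \mathrm{dist}\bigl((x,t),P^{-1}(0)\bigr)^\alpha,
\]
since each factor dominates the distance to $P^{-1}(0)=f^{-1}(0)$. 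Continuity extends this to all $x\in V$, and $|h|\ge c>0$ on a compact neighbourhood furnishes the constant. Proposition~\ref{kluczowe} is exactly this computation, phrased via $\deg\bigl((\{0\}^{m-1}\times W)\cdot Z_f\bigr)$.

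Your blow-up scheme, by contrast, has two real gaps. First, the homogeneous case is not established: you invoke the classical \L ojasiewicz inequality on the sphere to get \emph{some} exponent and then claim it is ``controlled by $\deg f_\alpha=\alpha$'', but the only mechanism you offer --- that $\mathrm{ord}_y f_\alpha\le\alpha$ at every $y$ --- is precisely the statement being proved, applied at the points of $f_\alpha^{-1}(0)\cap S$; without that, the sphere exponent could a priori exceed $\alpha$ and cannot be upgraded. Second, you yourself name the uniformity in $r$ as ``the genuinely delicate point'' and then do not carry it out; the curve-selection paragraph is muddled (the extremal arc for the exponent at $0$ must tend to $0$, not to a nearby $y_0\in V$), and upper semicontinuity of $y\mapsto\mathrm{ord}_y f$ alone does not keep the local constants from degenerating. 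The Weierstrass argument dissolves both issues simultaneously, because the factorization is global over the polydisc and the exponent is literally the number of factors. Your optimality paragraph, on the other hand, is correct and is the standard argument.
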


As before we consider the intersection cycle of zeroes 
$
Z_f=\Gamma_f\cdot(\Omega\times\{0\}).
$
\begin{prop}[\cite{D3} Proposition 2.1]\label{ord}
In the setting introduced above, $\mathrm{deg}_0 Z_f=\mathrm{ord}_0 f$.
\end{prop}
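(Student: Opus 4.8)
The plan is to reduce the computation to a one-variable vanishing order by slicing $Z_f$ with a generic complex line, in the spirit of the local-degree computations following Theorem~\ref{zera}. First I would identify the cycle $Z_f=\Gamma_f\cdot(\Omega\times\{0\})$ with the zero divisor of $f$. Since the projection $\pi(x,w)=x$ restricts to a biholomorphism of the smooth graph $\Gamma_f$ onto $\Omega$, carries $\Omega\times\{0\}$ isomorphically onto $\Omega$, and maps the intersection (which is proper, $f^{-1}(0)$ being a hypersurface by the identity principle from \cite{D2}) onto $f^{-1}(0)$, the intersection cycle is transported to $\sum_j m_j[V_j]$, where the $V_j$ are the irreducible components of the germ of $f^{-1}(0)$ at $0$ and $m_j=\mathrm{ord}_{V_j}f$ is the multiplicity of $f$ along $V_j$. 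In particular $\mathrm{deg}_0 Z_f=\sum_j m_j\,\mathrm{deg}_0 V_j$ by the definition of the local degree of a cycle.

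Next, following Tworzewski's intersection theory (\cite{T}), the local degree is realised by a generic complementary linear slice: as $Z_f$ has pure dimension $m-1$, for a generic complex line $L$ through $0$ one has $L\cdot Z_f=\mathrm{deg}_0 Z_f\cdot\{0\}$, that is $\mathrm{deg}_0 Z_f=i(L\cdot Z_f,0)$ for the Draper index \cite{Dr}. Parametrising such an $L$ (not contained in $f^{-1}(0)$) by $s\mapsto sv$, the intersection index of the smooth curve $L$ with the divisor $Z_f$ equals the vanishing order of the restriction, $i(L\cdot Z_f,0)=\mathrm{ord}_{s=0}f(sv)$. Writing the Taylor expansion $f=\sum_{j\geq d}P_j$ into homogeneous parts with $P_d\not\equiv 0$, one gets $f(sv)=P_d(v)s^d+O(s^{d+1})$, hence $\mathrm{ord}_{s=0}f(sv)=d$ as soon as $v\notin\{P_d=0\}$.

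It then remains to assemble two facts. On the one hand, the two genericity requirements on $L$ --- that it realise the local degree, and that its direction avoid the hypersurface $\{P_d=0\}$ --- each fail only on a proper algebraic subset of the space of directions, so a line meeting both can be chosen, giving $\mathrm{deg}_0 Z_f=d$. On the other hand, for the holomorphic germ $f$ the analytic order $\mathrm{ord}_0 f$ of the statement coincides with the algebraic order $d$: the bound $|f(x)|\leq\mathrm{const.}\|x\|^d$ is immediate from the expansion, while testing along any direction with $P_d(v)\neq 0$ shows that no larger exponent is admissible. Combining the two equalities yields $\mathrm{deg}_0 Z_f=d=\mathrm{ord}_0 f$.

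The hard part will be the compatibility of the two genericity conditions on the slicing line, together with the identification of the Draper index of a line and a divisor as a one-variable vanishing order; once those are in hand, the rest is just the expansion into homogeneous parts. An alternative, slicing-free route would run through the unique factorisation $f=u\prod_j g_j^{m_j}$ in the local ring $\mathcal{O}_{{\C}^m,0}$ and the additivity of $\mathrm{ord}_0$, provided one first establishes $\mathrm{deg}_0 V_j=\mathrm{ord}_0 g_j$ for a reduced irreducible hypersurface germ; I would keep this in reserve as a cross-check.
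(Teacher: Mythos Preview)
The paper does not supply a proof of this proposition; it is quoted from \cite{D3} without argument, so there is nothing to compare against directly. Your plan is correct. The identification of $Z_f$ with the zero divisor $\sum_j m_j[V_j]$, $m_j=\mathrm{ord}_{V_j}f$, is precisely what the paper itself recalls from \cite{D3} later (opening of the proof of Proposition~\ref{kluczowe}), and slicing with a generic line to obtain $\mathrm{deg}_0 Z_f=\mathrm{ord}_{s=0}f(sv)=d$ is the standard route. The two points you flag as ``hard'' are in fact routine: each genericity condition on the direction $v$ fails only on a proper algebraic subset of $\mathbb{P}^{m-1}$, so both hold simultaneously on a Zariski-open set; and for a smooth curve $L$ meeting the Cartier divisor $\mathrm{div}(f)$ properly at $0$, the Draper index is $i(L\cdot Z_f,0)=\dim_{\C}\mathcal{O}_{L,0}/(f|_L)=\mathrm{ord}_{s=0}f(sv)$ by the very construction of the index via the diagonal (\cite{Dr}, or \cite{Ch}).

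Judging from the surrounding context here (the Remark following Proposition~\ref{kluczowe}), the argument in \cite{D3} proceeds via Weierstrass Preparation in coordinates with $(\{0\}^{m-1}\times W)\cap f^{-1}(0)=\{0\}$: one writes $f=hP$ with $P$ a Weierstrass polynomial of degree $\mathrm{ord}_0 f$, notes $Z_f=Z_P$, and reads off $\mathrm{deg}_0 Z_P=\mathrm{deg}((\{0\}^{m-1}\times W)\cdot Z_P)$ from the one-variable factorisation of $P$ over generic base points. That is your slicing argument carried out along the distinguished coordinate line $\{0\}^{m-1}\times W$, so the two approaches coincide in substance; your version is slightly more intrinsic in that it does not invoke the Preparation Theorem explicitly.
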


We easily generalize these results to c-holomorphic functions, although only in a weak sense (compare the following theorem with the results of \cite{D4}). Consider a pure $k$-dimensional ($k\geq 2$) analytic subset $A$ of a neighbourhood $\Omega$ of $0\in {\C}^m$ with $0\in A$. Assume that $f\in\mathcal{O}_c(A)$ satisfies $f(0)=0$ and does not vanish identically on any irreducible component of $A$ containing zero.

\begin{thm}\label{cLoj}
In the c-holomorphic setting introduced above, there is a neighbourhood $W$ of zero such that
$$
|f(z)|\geq\mathrm{const.}\mathrm{dist}(z,f^{-1}(0))^{\mathrm{deg}_0 Z_f\cdot\mathrm{deg}_0 f^{-1}(0)},\quad z\in W\cap A.
$$
\end{thm}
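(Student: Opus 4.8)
The plan is to reduce everything to the holomorphic \L ojasiewicz inequality of Theorem \ref{ordLoj} by passing from $A$ to $\C^k$ through a generic finite projection and replacing $f$ by its norm along that projection. Concretely, after a generic linear change of coordinates I would pick a projection $\pi\colon\C^m\to\C^k$ which is proper on $A$ near $0$, so that $\pi|_A\colon A\to U$ is a branched covering of some degree $d=\mathrm{deg}_0 A$ onto a neighbourhood $U$ of $0$ in $\C^k$, chosen generic with respect to $V:=f^{-1}(0)$ (fibre direction transversal to the tangent cones involved). Writing $\sigma_1,\dots,\sigma_d$ for the local sheets of the covering, I would set
\[
F:=\prod_{j=1}^d f\circ\sigma_j ,
\]
the norm of $f$; being a symmetric function of the branches it extends to an honest holomorphic function on $U$, bounded near $0$, with $F^{-1}(0)=\pi(V)=:V'$. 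This is the device that turns the $c$-holomorphic datum on the possibly singular set $A$ into a genuinely holomorphic function on a domain, to which Theorem \ref{ordLoj} applies.

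Next I would identify the exponent produced by $F$. Since $F$ is the norm of $f$, its zero divisor is the pushforward of the cycle of zeroes, $Z_F=\pi_*Z_f$; because a generic $\pi$ is injective on each $(k-1)$-dimensional component of $V$ and preserves local degrees at $0$, this gives $\mathrm{deg}_0 Z_F=\mathrm{deg}_0 Z_f$. Hence, by Proposition \ref{ord} applied to $F$ on the domain $U$, $\mathrm{ord}_0 F=\mathrm{deg}_0 Z_F=\mathrm{deg}_0 Z_f$. Feeding the trivial estimate $|F(\pi(z))|\le |f(z)|\cdot M^{d-1}$ (with $M$ a bound for $|f|$, one sheet through $z$ contributing the factor $f(z)$) into the \L ojasiewicz inequality $|F(w)|\ge c\,\mathrm{dist}(w,V')^{\mathrm{ord}_0 F}$ then yields
\[
|f(z)|\ge c'\,\mathrm{dist}\bigl(\pi(z),V'\bigr)^{\mathrm{deg}_0 Z_f},\qquad z\in A\ \text{near}\ 0 .
\]
This accounts for the first factor in the announced exponent.

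It remains to replace the projected distance $\mathrm{dist}(\pi(z),V')$ by the genuine distance $\mathrm{dist}(z,V)$, and this is where I expect the real difficulty to lie. The naive bound $\mathrm{dist}(\pi(z),V')\le\mathrm{dist}(z,V)$ (since $\pi$ is $1$-Lipschitz) goes the wrong way, so what I actually need is a reverse, \L ojasiewicz-type inequality
\[
\mathrm{dist}\bigl(\pi(z),V'\bigr)\ge c''\,\mathrm{dist}(z,V)^{\mathrm{deg}_0 f^{-1}(0)} .
\]
Such an inequality cannot hold globally — a point $z$ lying on a sheet of $A$ far from $V$ may project onto $V'$ — so I would first split $W\cap A$ into the region $\{\mathrm{dist}(z,V)\ge\varepsilon\}$, where $|f|$ is bounded below by a positive constant on a compact set and the desired inequality is automatic, and the region $\{\mathrm{dist}(z,V)<\varepsilon\}$, where the nearest point of $V$ sits on the local sheet through $z$ and both $\mathrm{dist}(\cdot,V)$ and $\mathrm{dist}(\pi(\cdot),V')$ vanish exactly on $V$. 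On the near region the existence of \emph{some} exponent is standard (two nonnegative functions with the same analytic zero set on $A$), and the crux is to show that the exponent is controlled by the local degree $\mathrm{deg}_0 f^{-1}(0)$ of the zero set, the worst distortion occurring at $0$ where the branching of $\pi$ transverse to $V$ is measured precisely by that degree; the model case $A=\{w^2=x\}$ versus the cone $\{xy=z^2\}$ already shows both how a careless projection inflates the exponent and how the singularity of $V$ forces exactly the factor $\mathrm{deg}_0 f^{-1}(0)$.

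Finally, composing the two estimates gives $|f(z)|\ge \mathrm{const.}\,\mathrm{dist}(z,V)^{\mathrm{deg}_0 Z_f\cdot\mathrm{deg}_0 f^{-1}(0)}$ on a sufficiently small $W$; shrinking $W$ so that $\mathrm{dist}(z,V)<1$ there ensures that this crude (non-optimal, whence the ``weak sense'') exponent only weakens the inequality, so that no sharpness need be claimed. The main obstacle, to reiterate, is the reverse distance comparison and the verification that its exponent is bounded by $\mathrm{deg}_0 f^{-1}(0)$; everything else is a bookkeeping of degrees via the pushforward of cycles together with a direct appeal to Theorem \ref{ordLoj}.
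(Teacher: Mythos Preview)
Your route is genuinely different from the paper's, and the gap you yourself flag is real and not easily closed.

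You project the ambient set $A$ onto $\C^k$ and take the norm of $f$. The paper does the opposite: it projects the \emph{zero set} $f^{-1}(0)$ onto $\C^{k-1}$, and never compares $\mathrm{dist}(z,V)$ with a projected distance at all. Instead it invokes the Cygan--Tworzewski construction (\cite{CgT}, Proposition 2.2), which manufactures a holomorphic map $F\colon U\times\C^{m-k+1}\to\C^p$ with $F^{-1}(0)=f^{-1}(0)\cap(U\times V)$ and the built-in estimate $\|F(x,y)\|\ge\mathrm{dist}((x,y),Z)^d$, $d=\mathrm{deg}_0 f^{-1}(0)$. Since each component $F_j$ vanishes on $f^{-1}(0)$, the effective c-holomorphic Nullstellensatz from \cite{D3} yields $F_j^{\delta}=h_j f$ with $\delta=\mathrm{deg}_0 Z_f$, and the two inequalities compose to give the exponent $d\cdot\delta$ directly. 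The two factors arise from two separate, already-proved results, and no reverse distance inequality is needed.

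In your scheme, by contrast, the step
\[
\mathrm{dist}\bigl(\pi(z),\pi(V)\bigr)\ \ge\ c''\,\mathrm{dist}(z,V)^{\mathrm{deg}_0 f^{-1}(0)},\qquad z\in A,
\]
is doing all the work, and you have not proved it; you only argue that \emph{some} exponent exists on the near region. The heuristic that the exponent is governed by ``the branching of $\pi$ transverse to $V$'' and that this equals $\mathrm{deg}_0 f^{-1}(0)$ is not justified: the distortion of distances under $\pi|_A$ is controlled by how the sheets of $A$ coalesce (hence by the discriminant of $\pi|_A$ and ultimately by $\mathrm{deg}_0 A$), not a priori by the local degree of $V$. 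Already in the smooth case $A=\C^k$, where $\pi=\mathrm{id}$ and your reverse exponent is $1$, the claimed identification with $\mathrm{deg}_0 f^{-1}(0)$ fails whenever $V$ is singular; at best you would need the inequality $\alpha\le\mathrm{deg}_0 f^{-1}(0)$, and establishing that uniformly over singular $A$ is essentially a regular-separation theorem of the Cygan--Tworzewski type --- which is exactly the external input the paper uses to begin with. So the argument, as written, is circular in spirit: the hard analytic content has been pushed into an unproved lemma of the same strength as the target statement.
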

\begin{proof}
Write ${\C}^m={\C}^{k-1}\times{\C}^{m-k+1}$ with coordinates $(x,y)$.

We may assume that the coordinates are chosen in such a way that the projection $\pi(x,y)=x$ onto the first $k-1$ coordinates is proper on $Z:=f^{-1}(0)\cap (U\times V)$ with covering number equal to the local degree $\mathrm{deg}_0 f^{-1}(0)=:d$. Here $U\times V$ is a neighbourhood of the origin satisfying $(\{0\}\times \overline{V})\cap f^{-1}(0)=\{0\}$.

Applying Proposition 2.2 from \cite{CgT} we find a holomorphic mapping $F\colon U\times{\C}^{m-k+1}\to {\C}^p$ such that $F^{-1}(0)=f^{-1}(0)\cap (U\times V)$ and 
$$
||F(x,y)||\geq\mathrm{dist}((x,y), Z)^d,\quad (x,y)\in U\times{\C}^{m-k+1}.\leqno{(*)}
$$

If we write $F=(F_1,\dots, F_p)$ we observe that $F_j^{-1}(0)\cap A\supset f^{-1}(0)\cap (U\times V)$ for all $j$. The intersection of the graph $\Gamma_f$ with $\Omega\times\{0\}$ being proper, we can now apply the c-holomorphic Nullstellensatz from \cite{D3}. In other words, we find a neighbourhood $W\subset U\times V$ of zero and $p$ c-holomorphic functions $h_{j}$ on $W\cap A$ for which
$$
F_j^\delta=h_{j} f\ \textrm{on}\ A\cap W,\ j=1,\dots, p\leqno{(**)}
$$
with $\delta=\mathrm{deg}_0 Z_f$.

Combining $(*)$ and $(**)$ we eventually obtain the inequality looked for.
\end{proof}

\begin{prop}
Under the assumptions of the previous theorem, 
$$
\mathrm{deg}_0 Z_f\cdot\mathrm{deg}_0 f^{-1}(0)\geq \mathrm{ord}_0 f.
$$
\end{prop}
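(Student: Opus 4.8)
The plan is to read off from Theorem~\ref{cLoj} the \L ojasiewicz exponent $\alpha:=\mathrm{deg}_0 Z_f\cdot\mathrm{deg}_0 f^{-1}(0)$, for which
$$|f(z)|\geq c\,\mathrm{dist}(z,f^{-1}(0))^{\alpha},\quad z\in W\cap A,$$
with some $c>0$, and to confront it with the upper bound coming straight from the definition of the order of vanishing: since $\mathrm{ord}_0 f$ is attained as a maximum, there is a constant $C>0$ with $|f(z)|\leq C\,||z||^{\mathrm{ord}_0 f}$ for $z\in A$ near $0$. Neither inequality alone suffices, because near the zero set $\mathrm{dist}(z,f^{-1}(0))$ may be far smaller than $||z||$; the whole point is therefore to evaluate both estimates along a sequence $z_\nu\to 0$ in $A$ on which the distance to $f^{-1}(0)$ stays comparable to $||z_\nu||$.

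To produce such a sequence I would argue with tangent cones. As $f\not\equiv 0$ on any component of $A$ through $0$, the zero set $f^{-1}(0)$ is a hypersurface in $A$ (cf. the identity principle of \cite{D2}), so $\dim_0 f^{-1}(0)=k-1<k=\dim_0 A$. Passing to tangent cones preserves these dimensions, and since $f^{-1}(0)\subset A$ we have $C_0(f^{-1}(0))\subset C_0(A)$ with $\dim C_0(f^{-1}(0))=k-1<k=\dim C_0(A)$; hence $C_0(A)\not\subset C_0(f^{-1}(0))$ and there is a nonzero $v\in C_0(A)\setminus C_0(f^{-1}(0))$. By the very definition of the tangent cone this yields a sequence $A\setminus\{0\}\ni z_\nu\to 0$ with $z_\nu/||z_\nu||\to u:=v/||v||$. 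Writing $\rho:=\mathrm{dist}(u,C_0(f^{-1}(0)))>0$ and using that the tangent cone approximates $f^{-1}(0)$ to first order — for every $\varepsilon>0$ the set $f^{-1}(0)$ lies, near $0$, inside $\{\,\mathrm{dist}(\cdot,C_0(f^{-1}(0)))\leq\varepsilon||\cdot||\,\}$ — a routine comparison (together with $\mathrm{dist}(z,f^{-1}(0))\leq||z||$, as $0\in f^{-1}(0)$) gives $\mathrm{dist}(z_\nu,f^{-1}(0))\geq c'||z_\nu||$ for all large $\nu$, with some $c'>0$.

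It then remains only to combine the three ingredients along $z_\nu$ (which eventually lie in $W$):
$$c\,(c')^{\alpha}||z_\nu||^{\alpha}\leq c\,\mathrm{dist}(z_\nu,f^{-1}(0))^{\alpha}\leq|f(z_\nu)|\leq C\,||z_\nu||^{\mathrm{ord}_0 f},$$
whence $c\,(c')^{\alpha}||z_\nu||^{\alpha-\mathrm{ord}_0 f}\leq C$; letting $||z_\nu||\to 0$ forces $\alpha\geq\mathrm{ord}_0 f$, which is exactly the assertion. I expect the only genuine difficulty to be the second paragraph, namely the clean construction of the transversal sequence: one must know that the tangent cone of $A$ keeps the full dimension $k$ while that of the hypersurface $f^{-1}(0)$ drops to $k-1$, and one must bound $\mathrm{dist}(z_\nu,f^{-1}(0))$ from below by the distance to the tangent cone. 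Everything else is a one-line comparison of exponents.
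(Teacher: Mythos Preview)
Your argument is correct. The paper's own proof is a one-line citation (``This follows from Lemma~4.8 in \cite{D1}''), so what you have written is a genuinely different, self-contained route. Your strategy---feed the \L ojasiewicz inequality of Theorem~\ref{cLoj} and the upper bound $|f(z)|\le C\|z\|^{\mathrm{ord}_0 f}$ into a sequence $z_\nu\to 0$ in $A$ that approaches $0$ along a direction outside $C_0(f^{-1}(0))$---is exactly the classical mechanism showing that any admissible \L ojasiewicz exponent dominates the order of vanishing. The one point you flagged is indeed the only substantive step: that $\dim C_0(A)=k$ and $\dim C_0(f^{-1}(0))=k-1$ (standard for complex analytic sets, e.g.\ \cite{Wh} or \cite{Ch}), together with the Whitney first-order approximation $\mathrm{dist}(w,C_0(f^{-1}(0)))=o(\|w\|)$ for $w\in f^{-1}(0)$, from which your lower bound $\mathrm{dist}(z_\nu,f^{-1}(0))\ge c'\|z_\nu\|$ follows by the case split $\|w\|\le 2\|z_\nu\|$ versus $\|w\|>2\|z_\nu\|$. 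This is cleaner than invoking an external lemma; in effect you have reproduced, in a sequence version, the transversal-curve argument that the author sketches elsewhere (pick an irreducible curve $\Gamma\subset A$ with $C_0(\Gamma)\cap C_0(f^{-1}(0))=\{0\}$ and use a Puiseux parametrisation). The curve formulation has the small advantage that the regular separation estimate $\mathrm{dist}(x,f^{-1}(0))\ge\mathrm{const.}\,\|x\|$ on $\Gamma$ is available off the shelf (e.g.\ \cite{T}~3.4), whereas your sequence argument needs the tangent-cone computation spelled out; conversely, your version avoids Puiseux and the existence of such a curve inside a possibly singular $A$.
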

\begin{proof}
This follows from Lemma 4.8 in \cite{D1}.
\end{proof}

Using Corollary \ref{rzqd} and Proposition \ref{ord} we easily obtain
\begin{lem}
If $f=f(t,x)\in\mathcal{O}_{k+m}$ is such that $f_t(0):=f(t,0)=0$ for all $t$ small enough and $f_0=f(0,\cdot)$ is non-constant, then 
$$
\mathrm{ord}_0 f_t\leq \mathrm{ord}_0 f_0
$$
for all $t$ sufficiently close to zero.
\end{lem}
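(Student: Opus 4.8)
The plan is to read the statement as the upper semicontinuity of the local degree of the cycle of zeroes, transported from $\mathrm{deg}_0 Z$ to $\mathrm{ord}_0$ through Proposition \ref{ord}. First I would fix a representative of the germ: choosing a polydisc $P=T\times\Omega\subset{\C}^k\times{\C}^m$ around the origin (with $t\in{\C}^k$ the parameter and $x\in{\C}^m$ the variable), the germ $f\in\mathcal{O}_{k+m}$ is realized by a holomorphic, hence continuous, function $f\colon T\times\Omega\to{\C}$. Here $T$ is a small ball about $0\in{\C}^k$, which is connected and locally compact, and $\Omega$ is a domain, i.e. a smooth, irreducible, pure $m$-dimensional analytic set. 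For every $t$ the slice $f_t$ is holomorphic on $\Omega$, in particular c-holomorphic, so the framework of Theorem \ref{zera} and of Corollary \ref{rzqd} applies with $A=\Omega$ and $t_0=0$.

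Before invoking those results I would verify that all the objects involved are well defined on a neighbourhood of $0\in T$. Since $f_0$ is non-constant, there is a point $x_*\in\Omega$ with $f_0(x_*)\neq 0$; by continuity of $f$ we then have $f_t(x_*)\neq 0$, hence $f_t\not\equiv 0$, for all $t$ in a suitable neighbourhood of $0$. As $\Omega$ is irreducible and $f_t(0)=0$, each such $f_t$ is non-constant and vanishes at the origin, so that $\mathrm{ord}_0 f_t$, the cycle $Z_{f_t}$ and its local degree $\mathrm{deg}_0 Z_{f_t}$ are all defined; moreover $f_0^{-1}(0)\ni 0$ is nonempty, which is the remaining hypothesis of Theorem \ref{zera} at $t_0=0$.

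With this in place the conclusion is immediate. The additional hypothesis of Corollary \ref{rzqd} — that $f_t(a)=0$ for all $t$ and a fixed $a$ — is exactly the assumption $f(t,0)=0$ taken with $a=0$; that corollary therefore yields $\mathrm{deg}_0 Z_{f_t}\leq\mathrm{deg}_0 Z_{f_0}$ for all $t$ close enough to $0$. Applying Proposition \ref{ord} to the holomorphic functions $f_t$ and $f_0$ to rewrite each local degree as the corresponding order of vanishing, $\mathrm{ord}_0 f_t=\mathrm{deg}_0 Z_{f_t}$ and $\mathrm{ord}_0 f_0=\mathrm{deg}_0 Z_{f_0}$, gives $\mathrm{ord}_0 f_t\leq\mathrm{ord}_0 f_0$, as required. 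The only point demanding care is the bookkeeping in the second paragraph: one must guarantee that $f_t$ does not degenerate (vanish identically) for $t$ near $0$, so that both the order and the cycle make sense and Theorem \ref{zera} genuinely applies. The substantive content is already packaged in Corollary \ref{rzqd} (semicontinuity of $\mathrm{deg}_0 Z_{f_t}$) and in Proposition \ref{ord}, so I expect no deeper obstacle beyond this routine verification.
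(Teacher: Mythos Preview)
Your proposal is correct and follows exactly the route the paper indicates: the lemma is stated right after the sentence ``Using Corollary \ref{rzqd} and Proposition \ref{ord} we easily obtain'', and you have simply spelled out the verification of the hypotheses (non-degeneracy of $f_t$ near $t_0=0$, the point $a=0$) that the paper leaves implicit. There is no difference in approach, only in the level of detail.
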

\begin{ex}
The inequality may be strict as we easily see by taking $f(t,x)=tx+x^2$; then for $t\neq 0$, $\mathrm{ord}_0 f_t=1<\mathrm{ord}_0 f_0=2=\mathrm{ord}_0 f$. But of course there is no direct relation with $\mathrm{ord}_0 f$, it suffices to take $f(t,x)=tx+x^3$ in order to have $\mathrm{ord}_0 f_t=1<\mathrm{ord}_0 f=2<\mathrm{ord}_0 f$. 
\end{ex}

The proof of Theorem \ref{ordLoj} suggests the following result.
\begin{prop}\label{kluczowe}
Let $V\times W\subset\subset{\C}^{m-1}\times{\C}$ be a bounded, connected neighbourhood of zero (a polydisc) and let $P\in\mathcal{O}(V)[t]$ be unitary and  such that $P^{-1}(0)\subset (V\times W)$ projects properly onto $V$. Then in $V\times W$ there is 
$$
|P(x,t)|\geq \mathrm{dist}((x,t),P^{-1}(0))^\delta
$$
with $\delta=\mathrm{deg}((\{0\}^{m-1}\times W)\cdot Z_P)$.
\end{prop}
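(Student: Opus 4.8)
The plan is to reduce the inequality to the elementary factorization of the monic polynomial $P$ in $t$. First I would set $d:=\mathrm{deg}_t P$ and write, for each fixed $x\in V$,
$$P(x,t)=\prod_{j=1}^{d}(t-\lambda_j(x)),$$
where $\lambda_1(x),\dots,\lambda_d(x)$ are the roots of $P(x,\cdot)$ counted with multiplicity. The crucial bookkeeping point is that all these roots lie in $W$: this is exactly what the hypotheses $P^{-1}(0)\subset V\times W$ and proper projection onto $V$ guarantee, since for monic $P$ the fibre over each $x\in V$ consists of precisely $d$ roots, all of which must sit in $W$.

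Next I would identify the exponent $\delta$ with $d$. As $P^{-1}(0)$ is a hypersurface of pure dimension $m-1$ and $\{0\}^{m-1}\times W$ is a curve, the intersection is proper and $0$-dimensional, so $\delta=\mathrm{deg}((\{0\}^{m-1}\times W)\cdot Z_P)$ is the sum of the local intersection indices at the points $(0,\lambda_j(0))$. Restricting $P$ to the line means substituting $x=0$, which produces the monic one-variable polynomial $P(0,\cdot)$ of degree $d$; each local index equals the vanishing order of $P(0,\cdot)$ at the corresponding root, and, all $d$ roots lying in $W$, these orders add up to $d$. Hence $\delta=d$.

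The heart of the argument is then a one-line distance estimate. For every $j$ the point $(x,\lambda_j(x))$ belongs to $P^{-1}(0)$ and differs from $(x,t)$ only in the last coordinate, whence
$$\mathrm{dist}((x,t),P^{-1}(0))\leq |t-\lambda_j(x)|,\quad j=1,\dots,d.$$
Taking the minimum over $j$ and raising to the $d$-th power gives
$$\mathrm{dist}((x,t),P^{-1}(0))^{d}\leq\Bigl(\min_{1\leq j\leq d}|t-\lambda_j(x)|\Bigr)^{d}\leq\prod_{j=1}^{d}|t-\lambda_j(x)|=|P(x,t)|,$$
the middle inequality holding because the minimum is dominated by each of the $d$ factors. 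Combined with $\delta=d$, this is precisely the asserted inequality on $V\times W$.

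Since every step is elementary, I do not anticipate a genuine analytic obstacle; the only places demanding care are in the first two paragraphs. I must make sure that all roots really remain in $W$, so that the competitor points $(x,\lambda_j(x))$ with the same $x$-coordinate actually belong to $P^{-1}(0)$, and I must correctly read $\delta$ as the total root count $d$ rather than the intersection index at a single point. I expect this identification of $\delta$ with $\mathrm{deg}_t P$ to be the most delicate piece, and I would justify it by the standard fact that the index of a line meeting the divisor of a holomorphic function at a point, under a proper intersection, equals the order of vanishing of the restricted function.
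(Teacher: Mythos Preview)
Your argument is correct, and the core inequality is obtained exactly as in the paper: factor the monic polynomial over each $x$, bound the distance to $P^{-1}(0)$ by each linear factor $|t-\lambda_j(x)|$, and take the product. The difference lies entirely in how the exponent $\delta$ is identified with $\deg_t P$.

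You compute $\delta$ directly at $x=0$: the line $\{0\}^{m-1}\times W$ meets the divisor $Z_P$, and the standard curve--divisor formula gives $i\bigl((\{0\}\times W)\cdot Z_P;(0,t_0)\bigr)=\mathrm{ord}_{t_0}P(0,\cdot)$, so summing over the roots in $W$ yields $\delta=\deg_t P$. Since your factorization with multiplicities is valid for \emph{every} $x\in V$, the inequality holds everywhere at once.

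The paper instead writes $Z_P=\sum\alpha_j S_j$, moves to a \emph{generic} $x$ via the Tworzewski convergence $(\{x_\nu\}\times W)\cdot Z_P\stackrel{T}{\longrightarrow}(\{0\}\times W)\cdot Z_P$ to keep the total degree equal to $\delta$, writes $P(x,t)=\prod_{i=1}^{d}(t-t^{(i)})^{n_i}$ with \emph{distinct} roots, and then invokes the Tsikh--Yuzhakov formula to show $n_i=\mathrm{ord}_{b^{(i)}}P$, whence $\delta=\sum n_i$; the inequality, first obtained only on a dense set of $x$, is then extended by continuity. Your route is shorter and avoids both the genericity detour and the Tsikh--Yuzhakov citation; the paper's route, on the other hand, makes the decomposition of $Z_P$ into irreducible components and generic orders explicit, which ties in with the surrounding intersection-theoretic language.
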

\begin{proof}
Recall from \cite{D3} that $Z_P=\sum\alpha_j S_j$ where $S_j$ are the irreducible components of $P^{-1}(0)$ and $\alpha_j=\min\{\mathrm{ord}_z P\mid z\in \mathrm{Reg} S_j\}$ is the generic order of vanishing of $P$ along $S_j$. Note that each $S_j$ projects onto the whole of $V$. 

Now, since the intersections $(\{x\}\times W)\cap P^{-1}(0)$ are proper, by \cite{T} (see also \cite{Ch}) we conclude that for any $x_\nu\to 0$ we have
$$
(\{x_\nu\}^{m-1}\times W)\cdot Z_P\stackrel{T}{\longrightarrow}(\{0\}^{m-1}\times W)\cdot Z_P
$$
and so $\mathrm{deg}((\{0\}^{m-1}\times W)\cdot Z_P)=\delta$ for sufficiently large $\nu$. 

Observe that for the generic $x\in V$ we have the following situation: $\{x\}\times W$ intersects $P^{-1}(0)$ transversally at $d$ regular points $b^{(i)}=(x,t^{(i)})$, where $d$ is the multiplicity of the branched covering $P^{-1}(0)\to V$, each of these points belongs to exactly one $S_j$, all the $S_j$'s appear in this assignment, and $\mathrm{ord}_{b^{(i)}} P=\alpha_j$ for the unique $j$ such that $b^{(i)}\in S_j$. Therefore, we may write
$$
\delta=\sum_{b\in (\{x\}\times W)\cap P^{-1}(0)}\mathrm{ord}_b P.
$$

On the other hand, for any such point $x$ we have
$$
P(x,t)=\prod_{i=1}^d(t-t^{(i)})^{n_i}
$$
with $n_i$ independent of the point chosen. We observe that $n_i=\mathrm{ord}_{b^{(i)}} P$. Indeed, if we write $\{x\}\times W$ as the zero-set of an affine mapping $\ell=(\ell_1,\dots, \ell_{m-1})$ restricted to $V\times W$, then the transversality of the intersection $(\{x\}\times W)\cap P^{-1}(0)$ implies by the Tsikh-Yuzhakov result (see \cite{Ch}) that the multiplicity $m_{b^{(i)}}(P,\ell)$ at each point $b^{(i)}$ of the proper mapping germ $(P,\ell)$ is equal to the product of the orders of $P$ and the $\ell_j$'s, i.e. to $\mathrm{ord}_{b^{(i)}} P$. On the other hand, by \cite{Ch} p. 107-108 we easily see that $$m_{b^{(i)}}(P,\ell)=\mathrm{ord}_{t^{(i)}} P|_{\{x\}\times W}=n_i.$$

Therefore, $\delta=\sum_{i=1}^d n_i$. This allows us to write, for the generic $x\in V$, the following inequalities:
\begin{align*}
|P(x,t)|&=\prod_{i=1}^d|t-t^{(i)}|^{n_i}=\\
&=\prod_{i=1}^d||(x,t)-(x,t^{(i)})||^{n_i}\geq \\
&\geq \mathrm{dist}((x,t),P^{-1}(0))^{\sum_{i=1}^d n_i}.
\end{align*}
Extending this by continuity to the whole of $V\times W$ ends the proof.
\end{proof}
\begin{rem}
The proof above is in fact an extrapolation of the proof
of Theorem \ref{ordLoj}, where we use the Weierstrass Preparation in a neighbourhood of zero such that $(\{0\}\times W)\cap f^{-1}(0)=\{0\}$ and $\mathrm{ord}_0 f=\mathrm{ord}_0 P$. 
\end{rem}
\begin{cor}
If $f\colon V\times W\to {\C}$ is a holomorphic function such that $f^{-1}(0)$ projects properly onto $V$, then for some possibly smaller neighbourhood $U\subset V\times W$ of zero, $f$ satisfies the \L ojasiewicz inequality in $U$ with exponent $\mathrm{deg}((\{0\}\times W)\cdot Z_f)$.
\end{cor}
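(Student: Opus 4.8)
The plan is to reduce the statement to Proposition \ref{kluczowe} by means of the Weierstrass Preparation Theorem, exactly as suggested by the preceding remark. Since $f^{-1}(0)$ projects properly onto $V$, after shrinking $V\times W$ I may assume that $f$ does not vanish on $\overline{V}\times\partial W$; in particular $f(0,\cdot)$ does not vanish identically, so the Preparation Theorem applies at the origin. First I would write, in a neighbourhood of zero, $f=u\cdot P$, where $u$ is a non-vanishing holomorphic function (a unit) and $P\in\mathcal{O}(V)[t]$ is a unitary (Weierstrass) polynomial. After one further shrinking of the neighbourhood I may assume that this factorisation holds on all of a smaller product $V\times W$ and that $P^{-1}(0)=f^{-1}(0)$ still projects properly onto $V$ and stays inside $V\times W$, so that Proposition \ref{kluczowe} is genuinely applicable to $P$.

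The second step is to identify the two exponents. Since $u$ is a unit, $f^{-1}(0)=P^{-1}(0)$ as sets, and along each irreducible component $S_j$ the generic order of vanishing of $f$ coincides with that of $P$, the unit $u$ contributing nothing. Hence $Z_f=Z_P$ as cycles, and because $(\{0\}\times\overline{W})\cap f^{-1}(0)=\{0\}$ the whole intersection with $\{0\}\times W$ is concentrated at the origin, so
$$
\mathrm{deg}((\{0\}\times W)\cdot Z_f)=\mathrm{deg}((\{0\}\times W)\cdot Z_P)=\delta,
$$
the exponent furnished by Proposition \ref{kluczowe}; in particular $\delta$ is unaffected by the shrinkings.

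Now I would invoke Proposition \ref{kluczowe} to obtain, on $V\times W$,
$$
|P(x,t)|\geq \mathrm{dist}((x,t),P^{-1}(0))^{\delta}.
$$
Passing to a relatively compact neighbourhood $U\subset\subset V\times W$ of zero, the unit $u$ is bounded below by some $c>0$ on $U$. Multiplying the displayed inequality by $|u|$ and using $f^{-1}(0)=P^{-1}(0)$ (whence the two distances agree), I conclude
$$
|f(x,t)|=|u(x,t)|\,|P(x,t)|\geq c\,\mathrm{dist}((x,t),f^{-1}(0))^{\delta},\quad (x,t)\in U,
$$
which is the desired \L ojasiewicz inequality with exponent $\delta=\mathrm{deg}((\{0\}\times W)\cdot Z_f)$.

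The main obstacle is purely the bookkeeping of the successive shrinkings: one must check that after passing to the Weierstrass factorisation the zero set of $P$ still projects properly onto the smaller $V$, remains inside the smaller $W$, and that the local intersection number with $\{0\}\times W$ at the origin is preserved, so that Proposition \ref{kluczowe} applies with the very same $\delta$. Once the factorisation $f=uP$ is secured with proper projection intact, the estimates above are immediate, the only analytic input being that a non-vanishing holomorphic function is bounded away from zero on a compact set.
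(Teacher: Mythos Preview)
Your proof is correct and follows essentially the same route as the paper's: Weierstrass Preparation $f=uP$ with $u$ a unit, the identification $Z_f=Z_P$ via equality of orders of vanishing, and then Proposition \ref{kluczowe} applied to $P$ together with $\inf|u|>0$ on a relatively compact neighbourhood. The paper's argument is simply a terser version of yours, without the explicit bookkeeping of the shrinkings.
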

\begin{proof}
In $V\times W$ we can apply the Weierstrass Preparation Theorem and write $f=hP$ with a holomorphic function $h$ such that $h^{-1}(0)=\varnothing$. Shrinking the neighbourhood (actually, we need only to shrink $V$ if any), we may assume that $\inf|h|>0$. Then $Z_f=Z_P$, since $\mathrm{ord}_b f=\mathrm{ord}_b P$. The preceding Proposition gives the result.
\end{proof}

\section{The \L ojasiewicz inequality with parameter}

Eventually, we are ready to prove the main result.
\begin{thm}
Assume that $f\colon T\times\Omega\to{\C}$ is a continuous function where $T$ is a locally compact, connected topological space,  $\Omega\subset{\C}^m$ is a domain, and for all $t\in T$, $f_t\in\mathcal{O}(\Omega)$ does not vanish identically. Assume moreover that $0\in\Omega$ and $f_{t}(0)=0$ for any $t$. Then there is a neighbourhood $U\subset\Omega$ of zero such that, for all $t$ close enough to $t_0$, $$|f_t(x)|\geq c(t)\mathrm{dist}(x,f_t^{-1}(0))^{\alpha},\quad x\in U$$ where $c(t)>0$ is a constant depending on the parameter, but the exponent $$\alpha=\mathrm{ord}_0 f_{t_0}$$ is uniform.
\end{thm}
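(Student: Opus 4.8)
The plan is to fix, once and for all, a single generic direction and a polydisc adapted to $f_{t_0}$, and then to read the \L ojasiewicz exponent off the (constant) covering number of the projection of the zero sets, so that only the constant is forced to depend on $t$.

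I would begin by writing ${\C}^m={\C}^{m-1}\times{\C}$ with coordinates $(x,y)$ chosen so that the $y$-axis is generic for $f_{t_0}$ at the origin, whence $\mathrm{ord}_{y=0}f_{t_0}(0,y)=\mathrm{ord}_0 f_{t_0}=\alpha$. Next I fix a polydisc $V\times W\Subset\Omega$ so small that $y\mapsto f_{t_0}(0,y)$ vanishes in $\overline W$ only at $y=0$, with multiplicity $\alpha$; in particular, after a first shrinking of $V$, $f_{t_0}$ does not vanish on $\overline V\times\partial W$, and $f_{t_0}^{-1}(0)\cap(V\times W)$ projects properly onto $V$ with $\mathrm{deg}\big((\{0\}\times W)\cdot Z_{f_{t_0}}\big)=\alpha$. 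By the local uniform convergence $f_t\to f_{t_0}$ of Lemma \ref{jstcg} together with the compactness of $\overline V\times\partial W$, there is a neighbourhood $T_0\ni t_0$ such that $f_t$ still does not vanish on $\overline V\times\partial W$ for every $t\in T_0$; hence each $f_t^{-1}(0)\cap(V\times W)$ projects properly onto $V$, and its multiplicity-counted covering number $d_t$ equals the number of zeros of $y\mapsto f_t(0,y)$ in $W$, i.e. $d_t=\mathrm{deg}\big((\{0\}\times W)\cdot Z_{f_t}\big)$.

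The heart of the argument is the constancy of this exponent. Since $f_t(0,\cdot)\to f_{t_0}(0,\cdot)$ uniformly on $\overline W$ and the limit has exactly $\alpha$ zeros there, none on $\partial W$, the argument principle (classical Hurwitz theorem) forces $d_t=\alpha$ for all $t$ close enough to $t_0$. I can then feed each $f_t$ into the Corollary following Proposition \ref{kluczowe}: it yields the \L ojasiewicz inequality with exponent $\mathrm{deg}\big((\{0\}\times W)\cdot Z_{f_t}\big)=\alpha$. Crucially, Proposition \ref{kluczowe} itself delivers the estimate on the \emph{whole} of $V\times W$ for the monic factor $P_t$ of the Weierstrass factorisation $f_t=h_tP_t$; the only $t$-dependence enters through the positive constant $c(t)=\min_{\overline U}|h_t|$, where $U=V'\times W'\Subset V\times W$ is a fixed smaller polydisc chosen independently of $t$. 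Thus, for all $t\in T_0$,
$$
|f_t(x,y)|\geq c(t)\,\mathrm{dist}\big((x,y),f_t^{-1}(0)\cap(V\times W)\big)^{\alpha},\qquad (x,y)\in U .
$$

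Finally I would replace the local zero set by the full one: as $f_t^{-1}(0)\cap(V\times W)\subset f_t^{-1}(0)$, the distance to the full zero set is no larger, so $\mathrm{dist}(\cdot,f_t^{-1}(0))^{\alpha}\leq\mathrm{dist}(\cdot,f_t^{-1}(0)\cap(V\times W))^{\alpha}$ and the claimed inequality with $f_t^{-1}(0)$ follows at once, with $\alpha=\mathrm{ord}_0 f_{t_0}$ uniform. I expect the genuine obstacle to be exactly the \emph{uniformity of both the neighbourhood and the exponent}: a naive appeal to Theorem \ref{ordLoj} gives a $t$-dependent neighbourhood and the sharp exponent $\mathrm{ord}_0 f_t$, which may drop strictly below $\alpha$ (as in the example $f(t,x)=tx+x^2$), and does not directly combine into a uniform statement. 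The device of fixing one generic direction and reading the exponent from the Hurwitz-stable covering number $d_t$ — rather than from the origin-multiplicity, for which only the inequality $\mathrm{deg}_0 Z_{f_t}\leq\alpha$ of Corollary \ref{rzqd} is available — is what repairs this.
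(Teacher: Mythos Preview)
Your proof is correct and follows the same architecture as the paper's: choose coordinates making the last axis generic for $f_{t_0}$, fix a polydisc $V\times W$ in which the zero sets project properly, invoke Proposition~\ref{kluczowe} (via the Weierstrass factorisation) to get a \L ojasiewicz inequality on $V\times W$ with exponent $d_t=\deg((\{0\}\times W)\cdot Z_{f_t})$, and finally show $d_t=\alpha$ for $t$ near $t_0$. Your write-up is in fact more explicit than the paper's on two points the latter leaves tacit: why a single $U\Subset V\times W$ works for all nearby $t$ (namely because Proposition~\ref{kluczowe} already gives the estimate for $P_t$ on the whole polydisc, and the shrinking in the Corollary serves only to bound $|h_t|$ below on a compact), and the trivial passage from $f_t^{-1}(0)\cap(V\times W)$ to the full zero set.

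The one genuine methodological difference is how the constancy $d_t=\alpha$ is obtained. The paper deduces it from Theorem~\ref{zera}, i.e.\ from the Tworzewski convergence $Z_{f_t}\stackrel{T}{\longrightarrow}Z_{f_{t_0}}$, which in particular forces the total degree on the testing line $\{0\}\times W$ to stabilise. You instead apply the classical one-variable Hurwitz theorem to $y\mapsto f_t(0,y)$ on $\overline W$. Your route is more elementary and entirely sufficient in the holomorphic setting of this theorem; the paper's route, on the other hand, is what generalises to the c-holomorphic case treated immediately afterwards, where no one-variable restriction is available and the current-theoretic convergence is essential.
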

\begin{proof}
By Theorem \ref{zera} we know in particular that $f_t^{-1}(0)\stackrel{K}{\longrightarrow} f_{t_0}^{-1}(0)$. Of course these sets are hypersurfaces. The type of convergence implies that we can choose coordinates in ${\C}^m$ in such a way that for some neighbourhood $V\times W\subset{\C}^{m-1}\times{\C}$ of zero, $V$ connected and $W$ a disc, we have 
$$
f_t^{-1}(0)\cap (V\times\partial W)=\varnothing
$$
for all $t$ close enough to $t_0$. This means that the zero-sets intersected with $V\times W$ project properly onto $V$. Moreover, we may assume that $$(\{0\}^{m-1}\times W)\cdot Z_{f_{t_0}}=\mathrm{ord}_0 f_{t_0}\{0\}.$$ 

In the situation considered, the proof of Proposition \ref{kluczowe} shows that the \L ojasiewicz inequality for $f_{t_0}$ is satisfied in $V\times W$ with the exponent $d_t=\mathrm{deg}((\{0\}\times W)\cdot Z_{f_t})$:
$$
|f_t(x)|\geq c(t)\mathrm{dist}(x,f_t^{-1}(0))^{d_t},\quad x\in V\times W\leqno{(*)}
$$
where $c(t)>0$ is a constant.

But then, 
for $t$ close enough to $t_0$, the numbers $d_t$ fortunately coincide with $(\{0\}\times W)\cdot Z_{f_{t_0}}=\mathrm{ord}_0 f_{t_0}$ by the convergence (Theorem \ref{zera}). 


This ends the proof.
\end{proof}

It seems hard to obtain a satisfactory c-holomorphic counter-part to this Theorem due to the use of the Nullstellensatz with parameter. The best we were able to obtain is the following Theorem.

\begin{thm}
Assume that $f\colon T\times A\to{\C}$ is a continuous function where $T$ is a locally compact, connected topological space, $A$ is a pure $k$-dimensional analyti subset of an open set $\Omega\subset{\C}^m$, $0\in A$, and for all $t\in T$, $f_t\in\mathcal{O}_c(A)$ does not vanish identically on any irreducible compponent of $A$ through zero. Assume moreover that $f_{t}(0)=0$ for any $t$. Then there is a neighbourhood $U\subset\Omega$ of zero such that, for all $t$ close enough to $t_0$, $$|f_t(x)|\geq c(t)\mathrm{dist}(x,f_t^{-1}(0))^{\alpha},\quad x\in A\cap U$$ where $c(t)>0$ is a constant depending on the parameter, but the exponent $$\alpha=(\deg_0 Z_{f_{t_0}})^2 $$ is uniform.
\end{thm}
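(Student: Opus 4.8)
The plan is to run the proof of Theorem~\ref{cLoj} ``with parameter'': for each fixed $t$ near $t_0$ it yields a \L ojasiewicz inequality with its own local exponent, and the task is then to dominate all these exponents by the single number $\alpha=(\deg_0 Z_{f_{t_0}})^2$. First I would invoke Theorem~\ref{zera}, which gives $Z_{f_t}\stackrel{T}{\longrightarrow}Z_{f_{t_0}}$ and, in particular, $f_t^{-1}(0)\stackrel{K}{\longrightarrow}f_{t_0}^{-1}(0)$ (working in a neighbourhood of zero we may assume $A$ has only irreducible components through zero, so that the hypotheses of Theorem~\ref{zera} are met). As in the holomorphic main theorem, this Kuratowski convergence of hypersurfaces lets me fix coordinates in ${\C}^m$ and a single neighbourhood $U\times V$ of zero onto whose first $k-1$ coordinates every $f_t^{-1}(0)\cap(U\times V)$ projects properly, uniformly for $t$ close to $t_0$. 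On this common neighbourhood the argument of Theorem~\ref{cLoj} applies to each $f_t$ separately and produces, with some $c(t)>0$,
$$
|f_t(x)|\geq c(t)\,\mathrm{dist}(x,f_t^{-1}(0))^{\beta_t},\qquad \beta_t=\deg_0 Z_{f_t}\cdot\deg_0 f_t^{-1}(0).
$$

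The clean, purely intersection-theoretic step is the uniform control of $\beta_t$. Writing $Z_{f_t}=\sum\alpha_\iota S_\iota$ with integer multiplicities $\alpha_\iota\geq1$ and support $f_t^{-1}(0)=\bigcup S_\iota$, additivity of the local degree over irreducible components gives $\deg_0 f_t^{-1}(0)\leq\deg_0 Z_{f_t}$. Combined with Corollary~\ref{rzqd}, applicable because $f_t(0)=0$ for every $t$ and hence $\deg_0 Z_{f_t}\leq\deg_0 Z_{f_{t_0}}$ for $t$ close to $t_0$, this yields
$$
\beta_t\leq(\deg_0 Z_{f_t})^2\leq(\deg_0 Z_{f_{t_0}})^2=\alpha.
$$
Shrinking $U$ so that $U\subset\mathbb{B}(0,1)$ and using that $0\in f_t^{-1}(0)$ for all $t$, we get $\mathrm{dist}(x,f_t^{-1}(0))\leq||x||<1$ on $A\cap U$; then the monotonicity $d^{\beta_t}\geq d^{\alpha}$ for $0\leq d\leq1$ and $\beta_t\leq\alpha$ upgrades each individual estimate to $|f_t(x)|\geq c(t)\,\mathrm{dist}(x,f_t^{-1}(0))^{\alpha}$ on $A\cap U$, with the now uniform exponent.

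The main obstacle --- and the reason for the inflated exponent relative to the holomorphic theorem --- is making the neighbourhood on which Theorem~\ref{cLoj} holds genuinely independent of $t$. In the holomorphic case Proposition~\ref{kluczowe} delivered the estimate on the whole fixed polydisc by a direct Weierstrass argument, whereas Theorem~\ref{cLoj} rests on the Nullstellensatz representation $F_j^{\delta_t}=h_j f_t$, whose domain $W_t$ and c-holomorphic factors $h_j$ a priori depend on $t$; this is exactly the ``Nullstellensatz with parameter'' difficulty flagged before the statement. I would attack it by setting up the proper-projection data and the distance map of \cite{CgT} on the common neighbourhood fixed in the first step, and by controlling the local \L ojasiewicz exponents at points other than zero through the upper semicontinuity of local degrees on the limit cycle $Z_{f_{t_0}}$ together with the semicontinuity $\limsup_{t\to t_0,\,z_t\to z_0}\deg_{z_t}Z_{f_t}\leq\deg_{z_0}Z_{f_{t_0}}$ carried by the convergence, so that the worst local exponent over $A\cap U$ and over all $t$ near $t_0$ stays $\leq\alpha$. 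Carrying the Nullstellensatz exponent $\delta_t=\deg_0 Z_{f_t}$ uniformly over this fixed $U$ is the delicate point; it is what caps the achievable exponent at the crude value $(\deg_0 Z_{f_{t_0}})^2$.
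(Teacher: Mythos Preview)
Your outline has the right skeleton and you identify the obstacle correctly, but you do not actually close the gap you flag, and the fix you sketch is not the mechanism that works.

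The issue is precisely the one you state: applying Theorem~\ref{cLoj} to each $f_t$ separately gives a Nullstellensatz relation $F_j^{\delta_t}=h_j f_t$ only on some $W_t$ depending on $t$, because the c-holomorphic Nullstellensatz in \cite{D3} is local at $0$. Your proposed remedy---controlling local \L ojasiewicz exponents at points other than $0$ by semicontinuity of $\deg_z Z_{f_t}$ along the limit cycle---is a red herring: even with such bounds you would still need to patch together countably many local inequalities into one on a fixed $U$, and nothing in your argument produces a uniform constant or prevents the local neighbourhoods from shrinking as one moves along $f_t^{-1}(0)$. Also, once the coordinates are frozen by $f_{t_0}$, the covering number of $f_t^{-1}(0)\cap(V\times W)$ over $V$ need not equal $\deg_0 f_t^{-1}(0)$, so your value of $\beta_t$ is not what the proof of Theorem~\ref{cLoj} actually yields in those coordinates.

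The paper bypasses the problem by replacing the local Nullstellensatz by the \emph{global} effective one (Lemma~3.1 in \cite{D3}, cf.\ \cite{PT}): for $\varphi_t=(f_t,\ell)$ with $\ell$ a fixed linear projection onto ${\C}^{k-1}$, one arranges, via Kuratowski convergence of the graphs $\Gamma_{\varphi_t}$, that each $\varphi_t|_{A\cap(V\times W)}$ is a branched covering of a fixed image $P$ with multiplicity $q_t$, uniformly for $t$ near $t_0$. That lemma then gives $F_{t,j}^{\,q_t}=h_{t,j}f_t$ \emph{on the whole} of $A\cap(V\times W)$, with no shrinking. Combined with the \cite{CgT} estimate $\|F_t\|\geq\mathrm{dist}(\cdot,f_t^{-1}(0))^{d_t}$ (here $d_t$ is the sheet number of $f_t^{-1}(0)$ over $V$, not $\deg_0 f_t^{-1}(0)$), one gets the inequality with exponent $d_t q_t$ on the fixed $V\times W$. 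Finally one shows $q_t=\deg((\{0\}\times W)\cdot Z_{f_t})$ and, by Theorem~\ref{zera}, both $d_t$ and $q_t$ are bounded by $\deg_0 Z_{f_{t_0}}$; your ``$\mathrm{dist}<1$'' trick then upgrades the exponent to $\alpha$. The missing idea in your proposal is thus the passage through $q_t$ and the global Nullstellensatz, which is exactly what makes the domain independent of $t$.
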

\begin{proof}
We give the proof in several steps. 

\textbf{Step 1.} Choose coordinates in ${\C}^m$ in such a way that $A$ projects properly onto the first $k$ coordinates and, moreover, 
$$
i((\{0\}^{k-1}\times{\C}^{m-k+1})\cdot Z_{f_{t_0}};0)=\mathrm{deg}_0 Z_{f_{t_0}}.
$$
Let $\ell\colon {\C}^m\to {\C}^{k-1}$ be the linear epimorphism whose kernel is exactly $\{0\}^{k-1}\times{\C}^{m-k+1}$. Write $$\varphi_t\colon A\ni x\mapsto (f_t(x),\ell(x))\in{\C}\times{\C}^{k-1}$$ for $t\in T$. Fix a polydisc $V\times W\subset{\C}^{k-1}\times{\C}^{m+k-1}$ centred at zero such that $$(\{0\}^{k-1}\times\overline{W})\cap f_{t_0}^{-1}(0)=\{0\}.$$
In particular we may assume that $f_{t_0}^{-1}(0)$ projects properly onto $V$.

\textbf{Step 2.} The latter intersection corresponds to $(\overline{V\times W}\times \{0\}^k)\cap\Gamma_{\varphi_{t_0}}$
which means that there is a polydisc $P\subset{\C}^k$ such that the pure $k$-dimensional analytic set $(V\times W\times P)\cap \Gamma_{\varphi_{t_0}}$ projects properly \textit{onto} $P$ along $V\times W$. In other words, $\varphi_{t_0}|_{(V\times W)\cap A}$ is proper with image $P$.

As in Lemma \ref{jstcg}, the continuity of $$\Phi\colon T\times A\ni (t,x)\mapsto \varphi_t(x)\in{\C}^k$$
implies the Kuratowski convergence of the graphs $\Gamma_{\varphi_t}\stackrel{K}{\longrightarrow}\Gamma_{\varphi_{t_0}}$ as $t\to t_0$. Therefore, by the same argument as in Proposition \ref{wlasciwosc}, we conclude that for all $t$ close enough to $t_0$, the restrictions of the natural projection
$$\pi_t\colon (V\times W\times P)\cap \Gamma_{\varphi_{t}}\to P$$ 
are branched coverings. In particular, all these $\varphi_t$ have the same image $P$. Let $q_t$ denote the multiplicity of the branched covering $\varphi_t|_{A\cap (V\times W)}$. 

\textbf{Step 3.} By the choice of $V\times W$ and Theorem \ref{zera}, we know (cf. the proof of the previous Theorem) that for all $t$ close enough to $t_0$, the zero-sets $f_t^{-1}(0)\cap (V\times W)$ project properly onto $V$. Let $d_t$ denote the multiplicity of such a branched covering.

Since by Theorem \ref{zera} we know that the cycles of zeros of the restrictions $f_t|_{A\cap (V\times W)}$ converge with $t\to t_0$ in the sense of Tworzewski, we easily conclude from \cite{T} Lemma 3.5  and \cite{TW} that 
$$
d_{t_0}\leq d_t\leq \deg((\{0\}\times W)\cdot Z_{f_t})=\deg((\{0\}\times W)\cdot Z_{f_{t_0}})=\deg_0 Z_{f_{t_0}}.\leqno{(\star)}
$$
On the other hand, we observe that $q_t=\deg((\{0\}\times W)\cdot Z_{f_t})$ and so 
$$
q_t\leq \deg_0 Z_{f_{t_0}}.\leqno{(\star\star)}
$$
Indeed, it is easy to see that $q_t$ is in fact the multiplicity of the projection
$$
\pi\colon {\C}^{k-1}\times{\C}^{m-k+1}\times{\C}\ni(u,v,w)\mapsto (w,u)\in{\C}\times {\C}^{k-1}
$$
over $P$ when restricted to $\Gamma_t:=\Gamma_{f_t}\cap (V\times W\times {\C})$. This, in turn, by the classical Stoll Formula, is the total degree of the intersection cycle $\pi^{-1}(0)\cdot \Gamma_t$ In other words, 
$$
q_t=\deg((V\times W\times\{0\})\cdot \Gamma_{t}).
$$
However, in view of \cite{TW2} Theorem 2.2, we can write
\begin{align*}
(V\times W\times\{0\})\cdot \Gamma_{t}&=(\{0\}\times W)\cdot_{V\times W\times\{0\}}((V\times W\times\{0\})\cdot \Gamma_t)=\\
&=(\{0\}\times W)\cdot Z_{f_t|_{A\cap (V\times W)}}=\\
&=(\{0\}\times W)\cdot Z_{f_t}.
\end{align*}

\textbf{Step 4.} As in the proof of Thoerem \ref{cLoj}, by \cite{CgT} Proposition 2.2 we know that for each $t$ close to $t_0$ there are $p_t=d_t(m-k)+1$ holomorphic functions $F_{t,j}\colon V\times{\C}^{m-k+1}\to{\C}$ whose common zeroes form coincide with the set $f_t^{-1}(0)\cap (V\times W)$ and for which
$$
||(F_{t,1},\dots, F_{t,p_t})(x)||\geq\mathrm{dist}(x, f_t^{-1}(0)\cap (V\times W))^{d_t}
$$
for all $x\in V\times W$. 

Now, we can apply Lemma 3.1 from \cite{D3} (compare \cite{PT}) in order to get \textit{on the whole} of $A\cap (V\times W)$,
$$
F_{t,j}^{q_t}=h_{t,j} f_t, \quad j=1,\dots, p_t,
$$
with some functions $h_{t,j}\in\mathcal{O}_c(A\cap (V\times W))$.

This leads to the inequalities
$$
|f_t(x)|\geq c(t)\mathrm{dist}(x, f_t^{-1}(0))^{p_tq_t},\quad x\in A\cap (V\times W)\leqno{(\#)}
$$
for all $t$ close to $t_0$ and some constants $c(t)>0$. 

\textbf{Step 5.} Thanks to the continuity of the zero-sets (cf. Theorem \ref{zera}), Proposition \ref{odl} (cf. Remark \ref{uwaga}) allows us to choose an arbitrarily small neighbourhood $T_0$ of $t_0$ and a neighbourhood $U\subset V\times W$ of zero such that for all $t\in T_0$ and all $x\in U$, we have 
$$
\mathrm{dist}(x,f_t^{-1}(0))<1.
$$
Therefore, we may increase \textit{ad libitum} the exponent in $(\#)$, provided $x\in A\cap U$. The estimates $(\star)$ and $(\star\star)$ end the proof.
\end{proof}

\section{Acknowledgements}

During the preparation of this paper the author was partially supported by Polish Ministry of Science and Higher Education grant  IP2011 009571. 

This article was revised and finished during the author's stay at the University Lille 1 where he found excellent working conditions.

\end{document}